\newtheorem{theorem}{Theorem}[section]
\newtheorem{lemma}[theorem]{Lemma}
\newtheorem{proposition}[theorem]{Proposition}
\newtheorem{corollary}[theorem]{Corollary}
\theoremstyle{remark}
\newtheorem{observation}[theorem]{Remark}
\newtheorem{definition}[theorem]{Definition}
\newcommand{\les}{\lesssim}
\newcommand{\mc}{\mathcal}
\newcommand{\be}{\begin{equation}}
\newcommand{\ee}{\end{equation}}
\newcommand{\ba}{\begin{array}}
\newcommand{\ea}{\end{array}}
\newcommand{\bpm}{\begin{pmatrix}}
\newcommand{\epm}{\end{pmatrix}}
\newcommand{\lb}{\label}
\DeclareMathOperator{\Ran}{Ran}
\newcommand{\dd}{{\,}{d}}
\newcommand{\R}{\mathbb R}
\newcommand{\N}{\mathbb N}
\title[Positivity of Solutions]{A Positivity Criterion for the Wave Equation and Global Existence of Large Solutions}
\author{Marius Beceanu}
\address{University at Albany SUNY, Department of Mathematics and Statistics, Earth Science 110, Albany, NY, 12222, USA}
\email{mbeceanu@albany.edu}
\author{Avy Soffer}
\address{Rutgers University Department of Mathematics, 110 Frelinghuysen Rd., Piscataway, NJ, 08854, USA}
\email{soffer@math.rutgers.edu}
\subjclass[2010]{35L05, 35A01, 35A09, 35B40, 35B44, 35B33, 35B09, 35B51}
\begin{document}
\maketitle
\numberwithin{equation}{section}
\begin{abstract} In dimensions one to three, the fundamental solution to the free wave equation is positive. Therefore, there exists a simple positivity criterion for solutions. We use this to obtain large global solutions to two well-studied energy-supercritical semilinear wave equations, as well as some new results in the subcritical and critical cases.
\end{abstract}
\section{Introduction}
In this paper, we present large data global existence results for two energy-supercritical wave equations on $\R^{3+1}$. Both results are based on a simple positivity criterion for solutions to the free wave equation on $\R^{3+1}$.

Similar results hold in dimensions one and two, but for simplicity we focus on the three-dimensional case. For our convenience, some results are stated for smooth or classical solutions only (a priori), while others are stated for rough initial data.

\subsection{Quadratic nonlinearity (Nirenberg's equation)} The first equation we study is the quadratic semilinear wave equation on $\R^{3+1}$ satisfying the null condition
\be\lb{eq_fritz}
u_{tt} - \Delta u = u_t^2 - |\nabla u|^2,~u(0)=u_0,~u_t(0)=u_1.
\ee

Equation (\ref{eq_fritz}) is $\dot H^{3/2} \times \dot H^{1/2}$-critical, i.e.~$L^\infty$-critical. The nonlinearity is quadratic in $\nabla_{t, x} u$. Two is the Strauss exponent, meaning that there exist quadratic nonlinearities in $\nabla_{t, x} u$ (e.g.~$u_t^2$) that lead to finite time blow-up for arbitrarily small initial data, but all higher-order nonlinearities produce global solutions for sufficiently small initial data.

However, equation (\ref{eq_fritz}) has a better than expected behavior, because it satisfies the null condition (see \cite{klainer} or \cite{christo}; in fact, this equation is the canonical example of an equation that satisfies the null condition). Therefore, small initial data lead to global solutions.

For large data, global solutions to wave equations satisfying the null condition have been constructed by e.g.~\cite{wang}, \cite{shiwu}, \cite{mpy}, \cite{loy}. The paper \cite{li} proves global well-posedness for arbitrary large initial data for a supercritical wave equation, using its specific structure (as we also do below, but for another equation and using a different structure).

The most important fact about equation (\ref{eq_fritz}) is that by the substitution $v=e^{-u}$ it reduces to a linear wave equation on $\R^{3+1}$, that is $v_{tt}-\Delta v=0$. This transformation was suggested by Nirenberg and was first used in \cite{klainerman}, so we call (\ref{eq_fritz}) Nirenberg's equation.

Consequently, this equation has an infinite number of conserved quantities, i.e.~$\|(e^{-u}-1, e^{-u} u_t)\|_{\dot H^k \times \dot H^{k-1}}$. However, these quantities are not coercive, so cannot be used to always prove the global existence of solutions.

For example, the conserved energy has the form
$$
E[u](t)=\int_{\R^3 \times \{t\}} e^{-2u} (u_t^2 + |\nabla u|^2) \dd x.
$$
If $u \to +\infty$ then energy no longer controls the $\dot H^1 \times L^2$ norm of the solution. The same applies to the higher order conserved quantities.

Conserved quantities do not preclude the finite time blow-up of solutions to (\ref{eq_fritz}). In fact, there can be ODE-type blow-up\footnote{This type of blow-up can be achieved by considering constant initial data, which here lead to the ordinary differential equation $u_{tt}=u_t^2$. Solutions to this ODE always blow up in finite time for $u_t(0)=u_1>0$. Exploiting the finite propagation speed of solutions to the original equation (\ref{eq_fritz}), one can introduce a space cutoff and thus obtain finite energy solutions that blow up on an open set. See \cite{kel}.} for sufficiently large initial data.

Although relatively simple to study, due to the presence of infinitely many conserved quantities, equations (\ref{eq_fritz}) and (\ref{eq_fritz_general}) can serve as a model for more complicated semilinear equations, telling us what to expect in those cases.

In this paper we state sufficient (and in some cases necessary) conditions for the existence of global solutions to (\ref{eq_fritz}). These are classical solutions (i.e.~they satisfy the equation pointwise). For convenience, here results are stated a priori, assuming infinite regularity. Sharp versions will be stated elsewhere.

We need not assume finite energy (though energy is always locally finite), but there is also a stronger result about finite energy solutions, Proposition \ref{prop_dispersion}.

The first result refers to radially symmetric solutions.
\begin{proposition}[Main result]
\lb{fritz} Consider smooth radial initial data $(u_0, u_1)$ such that
\be\lb{cond}
(u_0)_r + |u_1| < \frac 1 r.
\ee
Then the equation (\ref{eq_fritz})
$$
u_{tt}-\Delta u=u_t^2-(\nabla u)^2,~u(0)=u_0,~u_t(0)=u_1
$$
admits a global smooth solution $u$ on $\R^{3+1}$ such that for every $(r, t) \in \R^{3+1}$
$$
u_r(r, t) + |u_t(r, t)| < \frac 1 r.
$$
Furthermore, if $u_0 \in L^\infty$, then $r (u_0)_r,~r u_1 \in L^\infty$ and $(u_0)_r + |u_1| \leq \frac {1-\epsilon} r$ for some $\epsilon>0$ if and only if $u \in L^\infty_{t, x}$. In this case,
\be\lb{comparison}
\inf u_0 - \ln (1 + \|r (u_0)_r\|_{L^\infty} + \|r u_1\|_{L^\infty}) \leq u \leq \sup u_0 + \ln(1/\epsilon).
\ee
Also, let $c_0:=e^{\sup u_0 - \inf u_0} \epsilon^{-1} (1 + \|r (u_0)_r\|_{L^\infty} + \|r u_1\|_{L^\infty})$. Then $ru_r$, $ru_t \in L^\infty_{t, x}$ and
\be\lb{detaliat}
u_r(r, t) + |u_t(r, t)| \leq \frac 1 r \Big(1-\frac 1 {c_0}\Big),~|u_r(r, t)| + |u_t(r, t)| \leq \frac {c_0-1} r. 
\ee
\end{proposition}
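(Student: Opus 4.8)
The plan is to exploit the linearizing substitution $v = e^{-u}$, which (as noted above) turns (\ref{eq_fritz}) into the free wave equation $v_{tt} - \Delta v = 0$ with data $v_0 = e^{-u_0} > 0$ and $v_1 = -u_1 e^{-u_0}$. Since the data are radial, I would pass to the variable $w(r,t) = r\, v(r,t)$, which satisfies the one-dimensional wave equation $w_{tt} - w_{rr} = 0$ on $r > 0$ together with the Dirichlet condition $w(0,t) = 0$ forced by smoothness of $v$ at the origin. Extending the data oddly in $r$, d'Alembert's formula gives $w(r,t) = p(r-t) + q(r+t)$, where $p' = \tfrac12((w_0)_r - w_1)$ and $q' = \tfrac12((w_0)_r + w_1)$ are built from the oddly extended data. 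Recovering $u = -\ln v = \ln r - \ln w$ produces a global smooth solution precisely as long as $w > 0$ for $r > 0$, so positivity of $w$ is the whole game.

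The key computation I would carry out is that, for $s > 0$, $q'(s) = \tfrac12 e^{-u_0}\big(1 - s[(u_0)_r + u_1]\big)$ and $p'(s) = \tfrac12 e^{-u_0}\big(1 - s[(u_0)_r - u_1]\big)$, so that hypothesis (\ref{cond}) is exactly equivalent to $p' > 0$ and $q' > 0$ everywhere (the negative half-line being handled by the odd symmetry, which interchanges $p'$ and $q'$). Granting this, since $p$ and $q$ are strictly increasing and $w(0,t) = p(-t) + q(t) = 0$, for any $r > 0$ I can write $w(r,t) = [p(r-t) - p(-t)] + [q(r+t) - q(t)] > 0$ because $r - t > -t$ and $r + t > t$. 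This proves $v > 0$, hence $u$ is a global smooth solution. Differentiating $u = \ln r - \ln w$ gives $u_r = \tfrac1r - w_r/w$ and $u_t = -w_t/w$ with $w_r = p' + q'$, $w_t = q' - p'$; then $u_r + |u_t| = \tfrac1r - 2\min(p',q')/w < \tfrac1r$, which is the asserted propagation of (\ref{cond}).

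For the quantitative statements I would replace the strict inequalities by the uniform bounds coming from $(u_0)_r + |u_1| \le (1-\epsilon)/r$ and $r(u_0)_r, r u_1 \in L^\infty$: these give $\tfrac\epsilon2 e^{-\sup u_0} \le p', q' \le \tfrac M2 e^{-\inf u_0}$ with $M = 1 + \|r(u_0)_r\|_{L^\infty} + \|r u_1\|_{L^\infty}$. Integrating $w_r = p' + q'$ from $0$ to $r$ (using $w(0,t) = 0$) then sandwiches $w/r = v$ between $\epsilon e^{-\sup u_0}$ and $M e^{-\inf u_0}$, and taking $-\ln$ yields (\ref{comparison}). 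Feeding the same two-sided bounds into $u_r + |u_t| = \tfrac1r - 2\min(p',q')/w$ and into the identities $u_r + u_t = \tfrac1r - 2q'/w$, $u_r - u_t = \tfrac1r - 2p'/w$ (together with $|u_r|+|u_t| = \max(|u_r+u_t|,|u_r-u_t|)$) produces (\ref{detaliat}), with $c_0$ appearing exactly as the ratio $M e^{\sup u_0 - \inf u_0}/\epsilon$.

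The main obstacle, and the part I would treat most carefully, is the converse direction of the $L^\infty$ equivalence: assuming only $u \in L^\infty_{t,x}$, I must recover $r(u_0)_r, ru_1 \in L^\infty$ and the strict margin $\epsilon > 0$. Here boundedness of $u$ is equivalent to $0 < \delta \le v \le \Delta$, i.e. $\tfrac\delta2(s_1+s_2) \le p(s_1) + q(s_2) \le \tfrac\Delta2(s_1+s_2)$ for all $s_1 + s_2 > 0$ (parametrizing $s_1 = r-t$, $s_2 = r+t$ over the whole global solution). Sending the free slack variable to the boundary $s_2 \to -s_1$ pins the difference quotients of $p$ and $q$ between $\tfrac\delta2$ and $\tfrac\Delta2$, hence $\tfrac\delta2 \le p', q' \le \tfrac\Delta2$; plugging back into the $t=0$ identities $r(u_0)_r = 1 - (p'+q')e^{u_0}$ and $ru_1 = (p'-q')e^{u_0}$ gives the $L^\infty$ bounds on the data and the margin $\epsilon = \delta e^{\inf u_0}$. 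I also need to confirm the regularity of $v = w/r$ at $r = 0$ (automatic, since odd data make $w$ odd in $r$) so that $u$ is genuinely a classical solution on all of $\R^{3+1}$.
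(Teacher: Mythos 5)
Your proposal is correct and follows essentially the same route as the paper: the Nirenberg substitution $v=e^{-u}$ followed by the radial reduction to a one-dimensional d'Alembert decomposition, where your $p'$ and $q'$ coincide exactly with the paper's $U_\pm=\frac12\big((rv_0)_r\mp rv_1\big)$ from Lemma \ref{positive}. The only difference is cosmetic --- you re-derive the positivity and boundedness criteria inline at the level of $w=rv$ (Dirichlet reduction) instead of citing Lemma \ref{positive} and Corollary \ref{boundedness}, which work with $(rv)_r$ --- and your quantitative bounds $A\le v\le B$ and the converse $L^\infty$ argument match the paper's.
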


Our conditions are true for all small initial data (in a suitable sense) and also allow for a wide class of large initial data, e.g.~by taking $(u_0)_r$ to be large and negative.

One can prove along the same lines that the solutions depend continuously on the initial data. Moreover, under reasonable assumptions the solutions have finite energy and disperse:
\begin{proposition}\lb{prop_dispersion}
Consider equation (\ref{eq_fritz}) with smooth radial initial data $(u_0, u_1)$. If $u_0 \in L^\infty$, $(u_0, u_1) \in \dot H^1 \times L^2$, and $(u_0)_r + |u_1| \leq \frac {1-\epsilon} r$ for some $\epsilon>0$, then $u \in L^\infty_t \dot H^1_x \cap L^2_t L^\infty_x$, $u_t \in L^\infty_t L^2_x$, and
\be\lb{est}\begin{aligned}
\sup_t \|(u(t), u_t(t))\|_{\dot H^1 \times L^2} &\leq e^{\sup u_0 - \inf u_0} \epsilon^{-1} \|(u_0, u_1)\|_{\dot H^1 \times L^2},\\
\|u\|_{L^2_t L^\infty_x} &\les \max(1, e^{\sup u_0} \epsilon^{-1}) e^{-\inf u_0} \|(u_0, u_1)\|_{\dot H^1 \times L^2}.
\end{aligned}\ee
\end{proposition}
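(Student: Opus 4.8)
The plan is to transfer everything to the free wave equation through the substitution $v=e^{-u}$, which by the computation underlying Proposition \ref{fritz} turns $u$ into a solution $v$ of $v_{tt}-\Delta v=0$ with data $(v_0,v_1)=(e^{-u_0},-u_1e^{-u_0})$. The pointwise bound (\ref{comparison}) traps $v$ between positive constants, $m:=e^{-\sup u_0}\epsilon\le v\le M$ for all $(r,t)$. Since $u_0$ is radial and $\dot H^1(\R^3)\hookrightarrow L^6$, the radial Sobolev inequality $|u_0(r)|\lesssim r^{-1/2}\|u_0\|_{\dot H^1}$ forces $u_0(r)\to 0$ as $r\to\infty$, so $u\to 0$ and $v\to 1$ at spatial infinity. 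I would therefore set $\tilde v:=v-1$, which again solves the free wave equation (a constant trivially solves it) and carries finite-energy data $(\tilde v_0,\tilde v_1)=(e^{-u_0}-1,-u_1e^{-u_0})\in\dot H^1\times L^2$, the $\dot H^1$ membership coming from $\nabla\tilde v_0=-\nabla u_0\,e^{-u_0}$.

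For the first line of (\ref{est}) I would use the two identities $\nabla u=-v^{-1}\nabla v$ and $u_t=-v^{-1}v_t$, which together with $v\ge m$ give the pointwise domination $|\nabla u|\le m^{-1}|\nabla v|$ and $|u_t|\le m^{-1}|v_t|$. Since the \emph{total} energy $\|\nabla v(t)\|_{L^2}^2+\|v_t(t)\|_{L^2}^2$ is conserved for the free wave equation,
$$
\|(u(t),u_t(t))\|_{\dot H^1\times L^2}^2\le m^{-2}\big(\|\nabla v(t)\|_{L^2}^2+\|v_t(t)\|_{L^2}^2\big)=m^{-2}\big(\|\nabla v_0\|_{L^2}^2+\|v_1\|_{L^2}^2\big).
$$
Bounding the right-hand side by $m^{-2}e^{-2\inf u_0}\|(u_0,u_1)\|_{\dot H^1\times L^2}^2$ via $|\nabla v_0|=|\nabla u_0|e^{-u_0}\le e^{-\inf u_0}|\nabla u_0|$ and $|v_1|\le e^{-\inf u_0}|u_1|$, then inserting $m=e^{-\sup u_0}\epsilon$ and taking the supremum in $t$, yields the first estimate; in particular $u\in L^\infty_t\dot H^1_x$ and $u_t\in L^\infty_t L^2_x$.

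For the dispersive bound I would convert $u$ back to $\tilde v$ pointwise: since $v=1+\tilde v\ge m$, the elementary inequality $|\ln(1+s)|\le\max(1,m^{-1})|s|$ valid whenever $1+s\ge m$ gives $|u|=|\ln v|\le\max(1,e^{\sup u_0}\epsilon^{-1})\,|\tilde v|$, hence
$$
\|u\|_{L^2_tL^\infty_x}\le\max(1,e^{\sup u_0}\epsilon^{-1})\,\|\tilde v\|_{L^2_tL^\infty_x}.
$$
It then remains to estimate $\|\tilde v\|_{L^2_tL^\infty_x}\lesssim\|(\tilde v_0,\tilde v_1)\|_{\dot H^1\times L^2}\le e^{-\inf u_0}\|(u_0,u_1)\|_{\dot H^1\times L^2}$, which is the endpoint Strichartz estimate $L^2_tL^\infty_x$ for the three-dimensional free wave equation applied to $\tilde v$; combining the two displays gives the second line of (\ref{est}).

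The main obstacle is exactly this last endpoint: the pair $(q,r)=(2,\infty)$ is scaling-admissible but the estimate is \emph{false} for general $3$D data, since the Keel--Tao endpoint is forbidden at $r=\infty$. The remedy is the radial symmetry, for which $\|\tilde v\|_{L^2_tL^\infty_x}\lesssim\|(\tilde v_0,\tilde v_1)\|_{\dot H^1\times L^2}$ does hold. I would establish this self-containedly through the one-dimensional reduction $w:=r\tilde v$, which solves $w_{tt}=w_{rr}$ on the half-line with Dirichlet condition $w(0,t)=0$; after odd extension, the $3$D energy of $\tilde v$ equals (up to a constant, using Hardy's inequality to absorb the cross term) the $1$D energy of $w$. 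Writing $\|\tilde v(t)\|_{L^\infty_x}=\sup_r|w(r,t)|/r$, using the radial trace bound $r^{1/2}|\tilde v(r)|\lesssim\|\tilde v\|_{\dot H^1}$ to control the region near the origin, and inserting the d'Alembert formula, reduces the space-time norm to a one-dimensional translation estimate on the travelling profiles, which is where the square-integrability in time is harvested. The remaining points---that $\tilde v$ genuinely has finite-energy data and that all manipulations are legitimate for the smooth global solution of Proposition \ref{fritz}---are routine.
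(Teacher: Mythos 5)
Your proposal is correct and follows essentially the same route as the paper: substitute $v=e^{-u}$, pass to $\tilde v=v-1$ to get finite-energy data for the free wave equation, use energy conservation together with the radial endpoint Strichartz estimate $\|\tilde v\|_{L^2_tL^\infty_x}\les\|(\tilde v_0,\tilde v_1)\|_{\dot H^1\times L^2}$, and convert back pointwise via the lower bound $v\ge e^{-\sup u_0}\epsilon$. The only difference is that the paper simply cites Klainerman--Machedon for that endpoint estimate, whereas you sketch a one-dimensional reduction for it; either is fine.
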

One can also obtain control of higher norms.

Condition (\ref{cond}) is optimal, meaning that it is necessary for global existence of solutions:
\begin{proposition}\lb{blowup} Consider smooth radial initial data $(u_0, u_1)$ such that for some $r_0 > 0$ $(u_0)_r(r_0) + |u_1(r_0)| \geq \frac 1 {r_0}$. Then the corresponding solution to equation (\ref{eq_fritz}) on $\R^3 \times I$ blows up in finite time, at a time $t_0$ with $|t_0| \leq r_0$.

More precisely, there exist $t_0$ with $|t_0| \leq r_0$ and $x_0 \in \R^3$ such that for $t$ close to $t_0$
$$\|u(t)\|_{L^\infty_x(|x - x_0| \leq 1)} \geq C + |\ln|t-t_0||,~\|u(t)\|_{H^{3/2}_x(|x - x_0| \leq 1)} \geq C |\ln|t-t_0||^{1/2}.
$$
\end{proposition}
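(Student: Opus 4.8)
The plan is to exploit the Nirenberg substitution $v = e^{-u}$ featured in the Introduction, which turns (\ref{eq_fritz}) into the free wave equation $v_{tt} - \Delta v = 0$ with data $v(0) = e^{-u_0} > 0$, $v_t(0) = -u_1 e^{-u_0}$. Since $u = -\ln v$, the maximal smooth solution $u$ persists exactly as long as $v > 0$, and a blow-up of $u$ is nothing but a zero of $v$. Thus the whole problem reduces to showing that the free evolution of this positive datum is forced to vanish somewhere by time $|t| \le r_0$, and then analyzing the logarithmic singularity of $-\ln v$ at that first zero.

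For the first part I would pass to $w = rv$, which solves $w_{tt} = w_{rr}$ on $r > 0$ with the Dirichlet condition $w(0,t) = 0$, so that d'Alembert's formula applies with the odd extensions of $f(r) = r e^{-u_0(r)}$ and $g(r) = -r u_1(r) e^{-u_0(r)}$. A direct computation then gives the value of $v$ at the spatial origin, $v(0,t) = w_r(0,t) = f'(t) + g(t)$, so that $v(0,\pm r_0) = e^{-u_0(r_0)}\bigl(1 - r_0((u_0)_r(r_0) \pm u_1(r_0))\bigr)$. Now $v(0,0) = e^{-u_0(0)} > 0$, whereas the hypothesis $(u_0)_r(r_0) + |u_1(r_0)| \ge 1/r_0$ forces one of these two brackets (the one with $\pm = \sgn u_1(r_0)$) to be $\le 0$. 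By the intermediate value theorem $v(0,\cdot)$ vanishes at some $|t| \le r_0$; letting $t_0$ be the first time $v$ vanishes anywhere and $x_0$ a vanishing point yields $|t_0| \le r_0$, and finite speed of propagation localizes the entire argument to the backward cone from $(x_0,t_0)$, so no global hypothesis on the data is needed. In the expected radial focusing scenario $x_0 = 0$.

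For the quantitative statements I would work near $(x_0,t_0)$, where $v$ is smooth, $v(x_0,t_0) = 0$, and (since $v > 0$ for $|t| < |t_0|$) the point $x_0$ is a global spatial minimum of $v(\cdot,t_0)$, so $\nabla_x v(x_0,t_0) = 0$ and $D^2_x v(x_0,t_0) \ge 0$. Writing $\delta(t) := v(x_0,t)$, Taylor expansion gives $\delta(t) \les |t-t_0|$ and $v(x,t) \les \delta(t) + C|x-x_0|^2$ for $|x-x_0|$ small. The $L^\infty$ bound is then immediate, since $\|u(t)\|_{L^\infty(|x-x_0|\le 1)} = -\ln \inf_{|x-x_0|\le 1} v \ge -\ln \delta(t) \ge |\ln|t-t_0|| - C$. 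For the $H^{3/2}$ bound I set the transition scale $\ell = \sqrt{\delta} \sim |t-t_0|^{1/2}$; on the annulus $\ell \les |x-x_0| \les \rho_0$ one has $v \approx c|x-x_0|^2$, hence $\nabla u = -\nabla v/v \approx -2(x-x_0)/|x-x_0|^2$, a field homogeneous of degree $-1$. Because the Gagliardo integrand for $\|u\|_{\dot H^{3/2}}^2 \approx \|\nabla u\|_{\dot H^{1/2}}^2 \approx \iint |\nabla u(x) - \nabla u(y)|^2 |x-y|^{-4} \dd x \dd y$ is nonnegative, I may restrict it to this annulus; its density is scale invariant for a degree $-1$ field, so each of the $\sim \log(\rho_0/\ell) \sim |\ln|t-t_0||$ dyadic shells contributes a fixed positive amount, giving $\|u(t)\|_{\dot H^{3/2}}^2 \gtrsim |\ln|t-t_0||$.

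The hard part will be the local profile analysis underlying the $H^{3/2}$ estimate: to obtain $v \gtrsim c|x-x_0|^2$ from below on the annulus (not merely the upper bound that already suffices for the $L^\infty$ rate) I need the Hessian $D^2_x v(x_0,t_0)$ to be nondegenerate, and to identify $\ell \sim |t-t_0|^{1/2}$ I need $\partial_t v(x_0,t_0) \ne 0$. Both are generic but must be justified; in the radial focusing case $x_0 = 0$ they reduce to the single inequality $v_{rr}(0,t_0) > 0$, which I would extract from the d'Alembert representation of $w$ together with the first-vanishing property. A degenerate Hessian (higher-order vanishing of $v$) would only alter constants and the power of the logarithm, so I would carry out the nondegenerate case in full and merely remark on the degenerate one.
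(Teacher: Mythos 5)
Your reduction to the free wave equation, the d'Alembert computation of $v(0,\pm r_0)$ (which is exactly the converse direction of the paper's Lemma \ref{positive}, rederived via $w=rv$), the intermediate value theorem, the definition of $t_0$ as the first vanishing time, and the $L^\infty$ rate via $v(x_0,t)\les|t-t_0|$ all match the paper's proof and are correct. The problem is the $H^{3/2}$ estimate, where you diverge from the paper and where your argument has a genuine gap that you yourself flag but underestimate. Your annulus/scale-invariance argument needs a two-sided bound $v\approx c|x-x_0|^2$ on $\ell\les|x-x_0|\les\rho_0$, hence a nondegenerate Hessian $D^2_xv(x_0,t_0)>0$. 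This is not a ``generic'' technicality to be remarked upon: in the radial setting of this proposition, whenever the first zero occurs at radius $r_1>0$, the function $v(\cdot,t_0)$ vanishes on the entire sphere $|x|=r_1$, so the Hessian at $x_0$ is automatically degenerate in all tangential directions and the profile is $v\approx\delta(t)+c(|x|-r_1)^2$, not $\delta(t)+c|x-x_0|^2$. Your lower bound on $v$ over the annulus, and with it the identification of $\nabla u$ with a degree $-1$ homogeneous field, then fails. (In that particular case one can check the $H^{3/2}$ norm in fact blows up faster than logarithmically, via $\|\nabla u\|_{L^2}\sim\delta^{-1/4}$, but that is a separate argument you have not made, and a full case analysis of the possible vanishing profiles of $v$ at $(x_0,t_0)$ is not something your sketch supplies.)

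The paper sidesteps all of this with one idea you are missing: the Trudinger--Moser inequality, $\int_\Omega\exp\big((|u|/(C\|u\|_{H^{3/2}}))^2\big)-1\dd x\leq 1$. The smoothness of $v$ and $v(x_0,t_0)=0$ give only the \emph{upper} bound $v(x,t)\les|x-x_0|+|t-t_0|$, hence $|u|\gtrsim|\ln|t-t_0||$ on the whole ball $|x-x_0|\les|t-t_0|$, of measure $\sim|t-t_0|^3$. Feeding a function that is of size $M=|\ln|t-t_0||$ on a set of measure $|t-t_0|^3$ into Trudinger--Moser (this is the coordinate change $x-x_0=|t-t_0|(y-x_0)$ in the paper) yields $\|u(t)\|_{H^{3/2}}\gtrsim M/\sqrt{\ln(1/|t-t_0|^3)}\sim|\ln|t-t_0||^{1/2}$ with no lower bound on $v$ and no nondegeneracy hypothesis whatsoever. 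You should replace your profile analysis by this argument; your version as written does not prove the stated estimate in the cases that actually occur.
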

We prove blow-up in the (critical for the equation) $L^\infty_{loc}$ and $H^{3/2}_{loc}$ senses, meaning that the solution becomes unbounded near a point. This also means it cannot be continued as a classical solution.

Nevertheless, the proof suggests that at least in some cases it may be possible to continue the solution past the blow-up point. This and more aspects of blow-up will be explored elsewhere.


The same ideas used in the study of equation (\ref{eq_fritz}) apply to the more general Nirenberg-type semilinear equation (also with a nonlinearity that satisfies the null condition)
\be\lb{eq_fritz_general}
u_{tt} - \Delta u = f(u) (u_t^2 - |\nabla u|^2),~u(0)=u_0,~u_t(0)=u_1.
\ee
For simplicity we assume that $f(u)$ is a smooth function, e.g.~$f(u)=1$, $f(u)=-u$, $f(u)=\sin u$, or $f(u)=-\arctan u$.

We introduce the auxiliary function
\be\lb{F}
F(t) = \int_0^t e^{-\int_0^s f(\sigma) \dd \sigma} \dd s.
\ee
$F$ is defined such that $F''/F'=-f$. If $u$ solves (\ref{eq_fritz_general}), then $F(u)$ solves the free wave equation.

Note that $F$ is strictly increasing, $F'>0$, and therefore injective. 

The subsequent discussion has to take into account whether $F(\pm \infty)$ is finite or infinite. For example, if $f(u)=1$ then $F(-\infty)=-\infty$, but $F(+\infty)$ is finite; if $f(u)=-1$ then the situation is reversed; if $f(u)=u$ then $F(\pm \infty)$ are finite; if $f(u)=-u$ or $f(u)=-\arctan u$ then $F(\pm \infty)=\pm\infty$. These four are all the possible cases.

In the radial case we have the following necessary and sufficient result:
\begin{proposition}\lb{prop_fritz_general} Consider smooth radial $(u_0, u_1)$. If $F(\pm \infty)=\pm \infty$, then equation (\ref{eq_fritz_general}) admits a corresponding smooth solution $u$ on $\R^{3+1}$. If in addition $u_0 \in L^\infty$, then $r(u_0)_r,~ru_1 \in L^\infty$ if and only if $u \in L^\infty_{t, x}$. In this situation, $r u_r,~r u_t \in L^\infty_{t, x}$ as well. 

If $F(-\infty) =a \in \R$, suppose that
\be\lb{cond1}
-(u_0)_r + |u_1| < \frac {F \circ u_0 - a}{r F' \circ u_0} = \frac {\int_{-\infty}^{u_0} e^{\int_s^{u_0} f(\sigma) \dd \sigma} \dd s} r.
\ee
If $F(+\infty) = b \in \R$, suppose that
\be\lb{cond2}
(u_0)_r + |u_1| < \frac {b-F \circ u_0}{rF'\circ u_0} = \frac {\int_{u_0}^\infty e^{\int_s^{u_0} f(\sigma) \dd \sigma} \dd s} r.
\ee
Then (\ref{eq_fritz_general}) admits a corresponding smooth solution $u$ on $\R^{3+1}$ such that
$$
-u_r + |u_t| < \frac {\int_{-\infty}^u e^{\int_s^u f(\sigma) \dd \sigma} \dd s} r \text{ and/or } u_r + |u_t| < \frac {\int_u^\infty e^{\int_s^u f(\sigma) \dd \sigma} \dd s} r.
$$

If $u_0 \in L^\infty$, then \{$r(u_0)_r,~ru_1 \in L^\infty$ and
\be\lb{Linfty}
-(u_0)_r + |u_1| \leq \frac {\int_{-\infty}^{u_0} e^{\int_s^{u_0} f(\sigma) \dd \sigma} \dd s - \epsilon} r \text{ and/or } (u_0)_r + |u_1| \leq \frac {\int_{u_0}^\infty e^{\int_s^{u_0} f(\sigma) \dd \sigma} \dd s - \epsilon} r
\ee
for some $\epsilon>0$\} if and only if $u \in L^\infty_{t, x}$. In this case, one also has that $r u_r,~r u_t \in L^\infty_{x, t}$ and a condition similar to (\ref{Linfty}) holds for all $t$:
\be\lb{Linfty'}
-u_r + |u_t| \leq \frac {\int_{-\infty}^u e^{\int_s^u f(\sigma) \dd \sigma} \dd s - \tilde \epsilon} r \text{ and/or } u_r + |u_t| \leq \frac {\int_u^\infty e^{\int_s^u f(\sigma) \dd \sigma} \dd s - \tilde \epsilon} r.
\ee
\end{proposition}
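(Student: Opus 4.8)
The plan is to linearize (\ref{eq_fritz_general}) by the substitution $v = F(u)$, with $F$ as in (\ref{F}), and then read off every assertion from the representation of radial free waves. Since $F'(t) = e^{-\int_0^t f}$ gives $F'' = -fF'$, a direct computation yields
$$
v_{tt} - \Delta v = \bigl(F''(u) + f(u)F'(u)\bigr)\,(u_t^2 - |\nabla u|^2) = 0,
$$
so $v$ solves the free wave equation, while $u = F^{-1}(v)$ recovers a solution of (\ref{eq_fritz_general}) at every point where $v$ lies in the range $(F(-\infty), F(+\infty))$ of $F$. Existence and smoothness of $u$ are therefore equivalent to keeping $v$ inside this range; this is exactly the positivity problem, the endpoints $a = F(-\infty)$ and $b = F(+\infty)$ being the only obstructions.

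I would then reduce to one dimension exactly as for Proposition \ref{fritz}. Writing $V = rv$, the function $V$ solves $V_{tt} = V_{rr}$ and is odd in $r$, so d'Alembert gives $V_r + V_t = P(r+t)$ and $V_r - V_t = Q(r-t)$, where $P = V_0' + V_1$ and $Q = V_0' - V_1$ with $V_0 = rv_0$, $V_1 = rv_1$. Two elementary facts drive everything: oddness forces $Q(\eta) = P(-\eta)$, and the center value satisfies $v(0,t) = V_r(0,t) = P(t)$. The same structure holds for the shifted functions $r(v-a)$ and $r(b-v)$, which also solve the one-dimensional wave equation and vanish at $r = 0$. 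A short computation identifies the two characteristic invariants at $t = 0$ with the two sides of (\ref{cond1}) and (\ref{cond2}): for $r(v-a)$ the invariants at $t=0$ equal $(F(u_0)-a) + rF'(u_0)\bigl((u_0)_r \pm u_1\bigr)$, whose positivity is precisely (\ref{cond1}) after dividing by $rF'(u_0)$ and using $\int_{-\infty}^{u_0} e^{\int_s^{u_0} f}\,ds = (F(u_0)-a)/F'(u_0)$.

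With these identities the three existence regimes are immediate. If $F(\pm\infty) = \pm\infty$ the range is all of $\R$, so $u = F^{-1}(v)$ is defined globally and no condition is needed. If $a$ (resp. $b$) is finite, condition (\ref{cond1}) (resp. (\ref{cond2})) says $P, Q > 0$ at $t = 0$ for $r(v-a)$ (resp. $r(b-v)$); since $P$ and $Q$ are constant along characteristics and, by $Q(\eta) = P(-\eta)$, positivity at $t = 0$ for all $r > 0$ forces $P, Q > 0$ everywhere, we get $\partial_r[r(v-a)] > 0$ on all of $\R^{3+1}$, and integrating from the zero at $r = 0$ keeps $v > a$ (resp. $v < b$); when both endpoints are finite one imposes both conditions. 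The propagated inequalities $P, Q > 0$, rewritten in terms of $u$, are exactly the stated bounds $-u_r + |u_t| < \int_{-\infty}^u e^{\int_s^u f}\,ds/r$ and $u_r + |u_t| < \int_u^\infty e^{\int_s^u f}\,ds/r$, so the claimed pointwise control comes for free, just as (\ref{detaliat}) did for Proposition \ref{fritz}.

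For the $L^\infty$ statement I would argue both directions through the center-value identity $v(0,t) = P(t)$. For sufficiency, the $\epsilon$-margin in (\ref{Linfty}) gives $P, Q \geq \epsilon\,F'(u_0) \geq \alpha > 0$ (using $u_0 \in L^\infty$), while $r(u_0)_r, ru_1 \in L^\infty$ bounds $V_0', V_1$, hence $P, Q$, from above; thus $\partial_r[r(v-a)] = \tfrac12(P+Q)$ lies in $[\alpha, M]$, so $v - a \in [\alpha, M]$ uniformly and $u = F^{-1}(v)$ is bounded, with $ru_r = (V_r - v)/F'(u)$ and $ru_t = V_t/F'(u)$ bounded. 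For necessity, if $u \in L^\infty$ then $v(0,t) = F(u(0,t))$ stays in a compact subinterval of $(a,b)$; by $v(0,t) = P(t)$ this bounds $P$ above and below by positive constants for all $t$, and $Q(\eta) = P(-\eta)$ does the same for $Q$, which unwinds to $r(u_0)_r, ru_1 \in L^\infty$ together with the $\epsilon$-margin in (\ref{Linfty}), while carrying the same bounds along the characteristics yields the analogue of (\ref{Linfty}) at every time. The main obstacle is exactly this necessity direction: a priori, a bounded $u$ controls only averages of the characteristic derivatives, and it is the identification of the center value $v(0,t)$ with the invariant $P(t)$ that upgrades this to the pointwise, two-sided bounds on $P$ and $Q$ needed to recover the data conditions; the remaining work is bookkeeping of the four cases for $F(\pm\infty)$ and the ``and/or'' in (\ref{Linfty}).
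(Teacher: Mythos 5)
Your proposal is correct and follows essentially the same route as the paper: the substitution $v=F(u)$ linearizes the equation, and existence and boundedness of $u=F^{-1}(v)$ are reduced to keeping the free radial wave $v$ strictly between $a$ and $b$, which is read off from the one-dimensional reduction of $rv$. The only difference is cosmetic --- you re-derive the d'Alembert invariants (your $P$, $Q$ are, up to a factor of $2$, the $U_{\mp}$ in the paper's Lemma \ref{positive} and Corollary \ref{boundedness}) where the paper simply cites those two results applied to $v-a$ and $b-v$.
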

This is a large data result that generalizes Proposition \ref{fritz}. All the conclusions can be made quantitative. Under similar conditions we can also obtain finite energy and dispersive solutions and control of higher norms.

Conditions (\ref{cond1}) and (\ref{cond2}) are optimal, since their failure leads to finite time blow-up, as in Proposition \ref{blowup}. We omit the very similar proof.


Let $D^2 f$ denote the Hessian matrix of $f$ and in general $D^n f$ be the tensor consisting of the $n$-th order derivatives of $f$. In the nonradial case, we obtain:
\begin{proposition}\lb{prop_nonradial} Consider smooth initial data $(u_0, u_1)$. If $F(\pm \infty)=\pm \infty$, then equation (\ref{eq_fritz_general})
$$
u_{tt} - \Delta u = f(u) (u_t^2 - |\nabla u|^2),~u(0)=u_0,~u_t(0)=u_1
$$
admits a corresponding smooth solution on $\R^{3+1}$. If in addition $D^2 u_0, \nabla u_1 \in L^{3/2, 1}$, then $u \in L^\infty_{t, x}$.

If $F(-\infty) = a \in \R$, but $F(+\infty)=+\infty$, suppose that $\inf u_0 > -\infty$ and $u_1 \geq |\nabla u_0|$. Then equation (\ref{eq_fritz_general}) admits a corresponding smooth solution $u$ on $\R^3 \times [0, \infty)$ with $u \geq \inf u_0$.

Alternatively, suppose that $(u_0, u_1)$ decay at infinity together with their derivatives and
\be\lb{cond_nonradial1}
-\Delta u_0 + f(u_0) (\nabla u_0)^2 \geq |\nabla u_1 - f(u_0) u_1 \nabla u_0|. 
\ee
Then equation (\ref{eq_fritz_general}) admits a corresponding smooth solution $u$ on $\R^{3+1}$ with $u \geq 0$. If in addition $D^2 u_0, \nabla u_1 \in L^{3/2, 1}$, then $u \in L^\infty_{t, x}$.

Similar results apply to the case when $F(-\infty)=-\infty$, but $F(+\infty) = b \in~\R$.
\end{proposition}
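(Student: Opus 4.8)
The plan is to linearize via the substitution $v = F(u)$ from (\ref{F}). A direct computation using $F'' = -fF'$ shows that $v$ solves the free wave equation $v_{tt}-\Delta v = 0$ with data $v_0 = F(u_0)$, $v_1 = F'(u_0)u_1$; conversely, wherever a free solution $v$ takes values in the open interval $(F(-\infty), F(+\infty))$, the function $u = F^{-1}(v)$ is smooth and solves (\ref{eq_fritz_general}). Thus the whole proposition reduces to controlling the range of the free evolution of $(v_0, v_1)$.

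When $F(\pm\infty) = \pm\infty$, the map $F$ is a smooth diffeomorphism of $\R$, so $u = F^{-1}(v)$ is automatically a global smooth solution for any smooth data, giving the first assertion. For the $L^\infty$ bound I would first translate the hypotheses $D^2 u_0, \nabla u_1 \in L^{3/2,1}$ into $\nabla^2 v_0, \nabla v_1 \in L^{3/2,1}$ via the chain rule (the lower-order products $|\nabla u_0|^2$ and $u_1 \nabla u_0$ being handled by Lorentz--H\"older and Sobolev embedding), and then invoke the scaling-critical linear estimate $\|v\|_{L^\infty_{t,x}} \les \|\nabla^2 v_0\|_{L^{3/2,1}} + \|\nabla v_1\|_{L^{3/2,1}}$. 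Since $v$ is then bounded and $F^{-1}$ is continuous, $u \in L^\infty_{t,x}$.

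For the case $F(-\infty) = a$, $F(+\infty) = +\infty$ with $u_1 \geq |\nabla u_0|$, I would use Kirchhoff's formula written, for $t \geq 0$, as an average over $\{|y-x| = t\}$ of $v_0(y) + t\,(\nabla v_0(y)\cdot\omega + v_1(y))$, where $\omega = (y-x)/t$. The translated hypothesis $v_1 \geq |\nabla v_0|$ makes the bracketed term nonnegative, so $v(x,t) \geq \inf v_0 = F(\inf u_0) > a$ for all $t \geq 0$; hence $v$ stays in the range of $F$, bounded away from the bad endpoint $a$, and $u = F^{-1}(v) \geq \inf u_0$ is a global smooth solution on $\R^3 \times [0,\infty)$. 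The restriction to $t \geq 0$ is forced: for $t < 0$ the sign condition becomes $u_1 \leq -|\nabla u_0|$.

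The genuinely delicate point is the two-sided criterion behind the ``alternatively'' clause, which after substitution becomes: if $\rho := -\Delta v_0 \geq |\nabla v_1|$ and the data decay, then $v \geq 0$ on all of $\R^{3+1}$. Here Kirchhoff's formula is not manifestly signed, so I would recombine it. Writing $v_0$ as the Newtonian potential $v_0 = \frac{1}{4\pi}\int \rho(z)|x-z|^{-1}\dd z$ and applying Newton's theorem to the spherical average, the two $v_0$-terms collapse to the manifestly nonnegative exterior integral $\frac{1}{4\pi}\int_{|z-x|\geq t}\rho(z)|z-x|^{-1}\dd z$. For the $v_1$-term I would integrate along outgoing rays, $|v_1(x+t\omega)| \leq \int_t^\infty |\nabla v_1(x+s\omega)|\dd s$, and pass to polar coordinates to bound it below by $-\frac{t}{4\pi}\int_{|z-x|\geq t}|\nabla v_1(z)|\,|z-x|^{-2}\dd z$. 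Adding the two and using $\rho \geq |\nabla v_1|$ together with $|z-x| \geq t$ yields
$$
v(x,t) \geq \frac{1}{4\pi}\int_{|z-x|\geq t}\frac{|\nabla v_1(z)|}{|z-x|}\Big(1-\frac{t}{|z-x|}\Big)\dd z \geq 0.
$$
Since the hypothesis is invariant under $v_1 \mapsto -v_1$, the same bound holds for $t < 0$, giving $v \geq 0 = F(0)$ everywhere and hence $u = F^{-1}(v) \geq 0$; the $L^\infty$ claim follows as before once $v$ is also bounded above by the same linear estimate. The final case $F(-\infty)=-\infty$, $F(+\infty)=b$ follows by the reflection $u \mapsto -u$, which conjugates it to the situation just treated. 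The main obstacle is exactly this recombination: making the indefinite Kirchhoff representation positive, for which the Newton-potential identity and the ray-integration bound are the crucial devices.
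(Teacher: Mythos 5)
Your proposal is correct and follows essentially the same route as the paper: the substitution $v=F(u)$ reducing everything to positivity/boundedness criteria for the free wave equation, with the two-sided criterion obtained by rewriting Kirchhoff's formula via the Newtonian potential of $-\Delta v_0$ over the exterior region $\{|z-x|\geq t\}$ and the ray-integration bound for the $v_1$ term (these are exactly the paper's formulas (\ref{delta1})--(\ref{delta2}) in Lemma \ref{positive_nonradial}), and the $L^\infty$ bound via the chain rule and the scaling-critical $L^{3/2,1}$ (Kato-space) estimate. The only cosmetic difference is that the paper has already packaged the positivity and boundedness statements as Lemma \ref{positive_nonradial} and simply cites them, whereas you re-derive them inline.
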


In particular, equation (\ref{eq_fritz}) also falls under the hypotheses of Proposition~\ref{prop_nonradial}.

Here $L^{3/2, 1}$ is a Lorentz space; for their definition and properties see \cite{bergh}. In terms of the more familiar Lebesgue spaces, one has that $L^{3/2-\epsilon} \cap L^{3/2+\epsilon} \subset L^{3/2, 1} \subset L^{3/2}$.

One can easily show (using the substitution $v=F(u)$) that the condition (\ref{cond_nonradial1}) allows for large initial data. This is obvious for the other condition.

Under similar conditions we can also obtain finite energy and dispersive solutions and control of higher norms.

In the nonradial case there is no expectation that our conditions are optimal. Nevertheless, we can obtain a more general result. A solution to (\ref{eq_fritz_general}) can be continued as long as $F(u)>a$ and/or $F(u)<b$ (and indefinitely if $F(\pm \infty) = \pm \infty$). As soon as $F(u)=a$ or $F(u)=b$, one has blow-up in the $L^\infty_{loc}$ and $H^{3/2}_{loc}$ sense, as in Proposition \ref{blowup}.

We next prove that all sufficiently nice solutions to equation (\ref{eq_fritz_general}) either disperse or blow up in finite time; solitons, infinite time blow-up, or other more complicated types of behavior are excluded. This can be construed as a version of the soliton resolution conjecture for this problem, except there are no solitons. More generally, such a result is called asymptotic completeness.

\begin{proposition}[Asymptotic completeness]\lb{prop_general} For Schwartz-class initial data $(u_0, u_1)$, equation (\ref{eq_fritz_general})
$$
u_{tt} - \Delta u = f(u) (u_t^2 - |\nabla u|^2),~u(0)=u_0,~u_t(0)=u_1
$$
always admits a smooth solution $u$ that either blows up in finite time (in $L^\infty_{loc}$ and $H^{3/2}_{loc}$, see Proposition \ref{blowup}) or is globally defined on $\R^{3+1}$. In the latter case $u(t)$ is a Schwartz-class function for each $t \in \R$, $\sup_t \|u(t)\|_{H^n} < \infty$ for each $n$, $u$ and all its derivatives disperse: $\|D^n u(t)\|_{L^\infty} \les_n |t|^{-1}$.\\
Finally, $u$ scatters, meaning it behaves like a solution of the free wave equation as $t \to \pm \infty$: there exist Schwartz-class $v_0$, $v_1$ such that, if $v$ is the corresponding solution of the free wave equation on $\R^{3+1}$, for any $n \geq 0$
$$
\lim_{t \to +\infty} \|u(t)-v(t)\|_{H^n} = 0.
$$
\end{proposition}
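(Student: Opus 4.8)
The plan is to linearize the equation through the substitution $v=F(u)$ from (\ref{F}) and then read off every assertion from the corresponding elementary fact about the free wave $v$. Since $F'=e^{-\int_0^{\cdot}f}$ and hence $F''=-fF'$, a direct computation gives
\be
v_{tt}-\Delta v = F'(u)\big[(u_{tt}-\Delta u)-f(u)(u_t^2-|\nabla u|^2)\big]=0,
\ee
so $v$ solves the free wave equation with Schwartz data $v_0=F(u_0)$, $v_1=F'(u_0)u_1$. By the Kirchhoff formula $v$ is a global smooth solution, $v(t)$ is Schwartz for every $t$ (the propagators $\cos(t|\xi|)$ and $\sin(t|\xi|)/|\xi|$ are Fourier multipliers with smooth symbols, so they preserve $\mathcal{S}$), and because $F$ is a smooth increasing bijection from $\R$ onto $(a,b)=(F(-\infty),F(+\infty))$, I set $u=F^{-1}(v)$ wherever $v$ takes values in $(a,b)$. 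Reversing the computation, this $u$ is smooth and solves (\ref{eq_fritz_general}) with the prescribed data.

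Second, I would establish the dichotomy. Since a Schwartz free wave obeys $\|v(t)\|_{L^\infty}\lesssim\langle t\rangle^{-1}$ and is continuous, $\sup_{\R^{3+1}}|v|<\infty$, so the only way $u$ can fail to extend is for $v$ to reach an endpoint $a$ or $b$. If $F(\pm\infty)=\pm\infty$ then $(a,b)=\R$ and this never happens, so $u$ is global. Otherwise $u$ is global precisely when $a<\inf v\le\sup v<b$; and if $v$ first meets $\{a,b\}$ at some time $t_0$ — necessarily finite, since $\|v(t)\|_{L^\infty}\to 0$ forces the set of such times to be bounded — then $u=F^{-1}(v)\to\mp\infty$ there, which is exactly the $L^\infty_{loc}$, $H^{3/2}_{loc}$ blow-up of Proposition \ref{blowup}.

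Third, in the global case I would transfer the asymptotics of $v$ to $u$ through $u=\Phi(v)$, $\Phi:=F^{-1}$. Here $v$ ranges in a compact subinterval of $(a,b)$ on which $\Phi$ is smooth with bounded derivatives of all orders and $\Phi(0)=0$ (as $F(0)=0$). The Schwartz property of $u(t)$ then follows from that of $v(t)$ together with $\Phi(0)=0$; the uniform bound $\sup_t\|u(t)\|_{H^n}<\infty$ follows from the conservation of the homogeneous norms $\|v(t)\|_{\dot H^s}^2+\|v_t(t)\|_{\dot H^{s-1}}^2$ and a Moser/Fa\`a di Bruno estimate, once one notes that $\|v(t)\|_{L^2}$ stays bounded. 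This last point is the one genuinely three-dimensional ingredient: it holds because $v_1\in\dot H^{-1}$, the low-frequency integral $\int_{\R^3}|\xi|^{-2}|\hat v_1|^2\dd\xi$ converging thanks to the $r^2\,dr$ measure. For dispersion I differentiate $u=\Phi(v)$ by the chain rule: every term in $D^n u$ ($n\ge 0$) carries at least one derivative of $v$ with a bounded prefactor $\Phi^{(k)}(v)$, and the slowest-decaying such term obeys the free-wave bound $\|D^m v(t)\|_{L^\infty}\lesssim|t|^{-1}$, giving $\|D^n u(t)\|_{L^\infty}\lesssim_n|t|^{-1}$.

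Finally, scattering: since $v$ is itself a free wave, it suffices to show $u(t)-v(t)\to 0$ in the energy norm. Writing $g:=\Phi-\mathrm{id}$, one has $g(0)=0$ and $g'(0)=\Phi'(0)-1=1/F'(0)-1=0$, so $|g'(w)|\lesssim|w|$ on the range of $v$; hence $\|\nabla(u-v)(t)\|_{L^2}=\|g'(v)\nabla v\|_{L^2}\lesssim\|v(t)\|_{L^\infty}\|\nabla v(t)\|_{L^2}\to 0$, and likewise $\|\partial_t(u-v)(t)\|_{L^2}\lesssim\|v(t)\|_{L^\infty}\|v_t(t)\|_{L^2}\to 0$ by dispersion and energy conservation. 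Thus $u$ scatters to the free solution $v$. The main obstacle is not any single estimate but the clean transfer of linear dispersion and scattering across the nonlinear change of variables: one must keep $v$ inside a compact subinterval of $(a,b)$ to control every derivative of $\Phi$, and handle the low-frequency behavior responsible for $\|u(t)\|_{L^2}$ remaining bounded.
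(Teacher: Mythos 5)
Your proposal is correct and follows essentially the same route as the paper: the substitution $v=F(u)$, the observation that the dispersing free wave $v$ either stays in a compact subinterval of $(F(-\infty),F(+\infty))$ (giving a global $u=F^{-1}(v)$ whose Schwartz, $H^n$, dispersive and scattering properties are inherited via chain-rule/Fa\`a di Bruno bounds on $F^{-1}$ and the expansion $u=v+O(v^2)$, using $F'(0)=1$) or reaches an endpoint at a finite spacetime point, yielding the blow-up of Proposition \ref{blowup}. The only additions are welcome details the paper leaves implicit, such as the low-frequency argument for $\|v(t)\|_{L^2}$ staying bounded.
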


\begin{observation} For equation (\ref{eq_fritz}), if the solution $u$ blows up in finite time, then it must blow up at a time $T$ with
$$
|T| \leq \frac 1 {4\pi} e^{\|u_0\|_{L^\infty}} (\|\Delta u_0\|_{L^1} + \|\nabla u_0\|_{L^2}^2 + \|\nabla u_1\|_{L^{1}});
$$
otherwise $u$ exists globally and disperses. More generally, for equation (\ref{eq_fritz_general}), the blow-up time $T$ must satisfy
$$
|T| \leq \frac 1 {4\pi} \frac {\max(F(\max u_0), -F(\min u_0))}{\min(F(+\infty), -F(-\infty))} (\|\Delta u_0\|_{L^1} + \|\nabla u_0\|_{L^2}^2 + \|\nabla u_1\|_{L^{1}}),
$$
where $F$ is defined by (\ref{F}).
\end{observation}

In the radial case we have a more precise classification, with necessary and sufficient conditions for global existence (see Proposition \ref{prop_fritz_general}).

The assumption that $(u_0, u_1)$ are of Schwartz class is not needed; we only need control of finitely many seminorms of the initial data. The question of optimal norms is an interesting one, but will be examined elsewhere.

\begin{observation} The previous existence results remain valid, with the same proof, if we add to the equation (\ref{eq_fritz}) a smooth source term $G \leq 0$, i.e.
$$
u_{tt} - \Delta u = u_t^2-|\nabla u|^2 + G.
$$
Indeed, following the Nirenberg transformation $v=e^{-u}$, $G$ becomes a potential with a positive contribution to the solution:
$$
v_{tt} - \Delta v = -G v.
$$
Since our results in this case depended on $v$ being positive, adding another positive source term does not affect them.

For the more general Nirenberg-type equation (\ref{eq_fritz_general}), one can always add a magnetic potential of the form $A_1 u_t + A_2 \cdot \nabla u$, where $A_1$, $A_2$ are constants.

One can also add other terms to the equation, usually at the price of having to deal with a nonlinear equation even after the Nirenberg transformation. In general,
$$
u_{tt} - \Delta u = u_t^2-|\nabla u|^2 + F(|u|)u
$$
becomes
$$
v_{tt} - \Delta v = v F(|\ln v|) \ln v.
$$
In particular, though, the nonlinear term $c e^u$ transforms into $-c$, a source term that does not depend on $v$. Such extensions of our results will be dealt with in another paper.
\end{observation}

\begin{observation} A similar finite time blow-up/dispersion dichotomy is true for the energy-supercritical Schr\"{o}dinger equation
$$
iu_t-\Delta u=(\nabla u)^2,~u(0)=u_0.
$$
This is also a $\dot H^{3/2}$-critical equation and one can also use a Nirenberg transformation, $e^u=v$, to solve it. Note that the evolution is not unitary.
\end{observation}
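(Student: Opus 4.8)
The plan is to linearize through the Nirenberg substitution $v = e^u$, just as for the wave equations above, and to observe that here it produces the \emph{free Schr\"odinger equation}. Differentiating gives $v_t = v\, u_t$ and $\Delta v = v\big((\nabla u)^2 + \Delta u\big)$, so that $\Delta u + (\nabla u)^2 = \Delta v / v$. Substituting into $i u_t = \Delta u + (\nabla u)^2$ and using $i u_t = i v_t/v$ yields $i v_t = \Delta v$, whence $v(t) = e^{-it\Delta} v_0$ with $v_0 = e^{u_0}$. For Schwartz $u_0$ the modulus $|v_0| = e^{\Re u_0}$ is bounded above and below, $w_0 := e^{u_0} - 1$ is again Schwartz, and since $e^{-it\Delta} 1 = 1$ we get the explicit formula
$$
v(t) = 1 + e^{-it\Delta} w_0 =: 1 + \phi(t).
$$

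With $v$ in hand, I would recover $u$ as the branch of $\log v$ pinned at $\log v_0 = u_0$ and continued analytically; this is well defined exactly on the connected space-time region where $v \neq 0$. This yields the dichotomy. Either $v$ vanishes at some finite space-time point $(x_0, t_0)$, in which case $\Re u = \log|v| \to -\infty$ as $(x, t) \to (x_0, t_0)$, forcing $L^\infty_{loc}$ and $H^{3/2}_{loc}$ blow-up exactly as quantified in Proposition \ref{blowup}, so that $u$ cannot be continued as a classical solution; or $v$ never vanishes on $\R^{3+1}$, in which case $u = \log v$ is global.

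For the global branch, dispersion and scattering descend from the linear theory for $\phi = e^{-it\Delta} w_0$. The free Schr\"odinger estimate gives $\|\phi(t)\|_{L^\infty} \lesssim |t|^{-3/2} \|w_0\|_{L^1}$, so $v(t) \to 1$ uniformly, $u(t) = \log(1 + \phi(t)) \to 0$ at the same rate, and the same holds for every derivative after differentiating the composition and using that $v$ stays bounded away from $0$ for $|t|$ large. Scattering follows by Taylor expanding $\log(1 + \phi) = \phi - \tfrac12 \phi^2 + \cdots$: the linear term is itself a free solution, while $\|\phi^2\|_{L^2} \lesssim \|\phi\|_{L^\infty} \|\phi\|_{L^2} \lesssim |t|^{-3/2}$ is time-integrable (and the higher terms decay faster), so by Duhamel $e^{it\Delta} u(t)$ converges as $t \to \pm\infty$.

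The main obstacle, and the genuine difference from the wave case, is that $v$ is now complex-valued, so there is no positivity to exploit: the free Schr\"odinger solution $1 + \phi$ is oscillatory and may develop zeros, and no analog of the sign condition (\ref{cond}) can rule this out. Hence the substantive content is the \emph{dichotomy} itself, not a simple sufficient condition for global existence. Two points need care: (i) confirming that the vanishing of $v$ produces honest blow-up in the critical norms rather than a mere failure of the representation, which runs as in Proposition \ref{blowup}; and (ii) obtaining scattering even though the free Schr\"odinger flow is $L^2$-unitary rather than $L^2$-dispersive, which is why one works with the $L^\infty$ decay of $\phi$ and the superlinear Taylor remainder — consistent with the remark that the nonlinear evolution $u \mapsto u(t)$ is itself not unitary.
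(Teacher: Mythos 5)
Your proposal is correct and follows exactly the route the paper's remark intends (and which mirrors the proofs of Propositions \ref{blowup} and \ref{prop_general}): the substitution $v=e^u$ linearizes the equation to the free Schr\"odinger flow $v(t)=e^{-it\Delta}v_0$, the dichotomy is governed by whether $v$ vanishes, vanishing yields $L^\infty_{loc}$ and $H^{3/2}_{loc}$ blow-up via the Lipschitz bound $|v|\les |x-x_0|+|t-t_0|$ and Trudinger--Moser, and otherwise $\inf|v|>0$ (by the same compactness argument as in Proposition \ref{prop_general}) gives global existence, dispersion from the $|t|^{-3/2}$ estimate, and scattering from the Taylor expansion of $\log(1+\phi)$. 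One cosmetic point: to define the branch of $\log v$ you need simple connectivity rather than mere connectedness of the region where $v\neq 0$, but this holds on the two domains you actually use ($\R^{3+1}$ in the global case, and a time slab $\R^3\times[0,t_0)$ before the first zero), so the argument stands.
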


Finally, although the main topic of this paper is the existence of global solutions, we also briefly look into the question of local existence. We state a result that holds in the general case; undoubtedly, there can be improvements for radial solutions.
\begin{proposition}\lb{prop_local_existence} Consider equation (\ref{eq_fritz_general}) with smooth initial data $(u_0, u_1)$ and assume that $F(-\infty)=a \in \R$, but $F(+\infty)=+\infty$. If $u_0,~\nabla u_0,~u_1 \in L^\infty$, then there exists a corresponding smooth solution $u$ on $\R^3 \times (-T, T)$, where
\be\lb{time}
T = \frac {F(\inf u_0) - a}{(\|\nabla u_0\|_{L^\infty} + \|u_t\|_{L^\infty})\sup F'(u_0)}\bigg.
\ee
Furthermore, $u \in L^\infty_{t, x}(\R^3 \times [-t, t])$ for any $t<T$.

Similar results hold in the case when $F(+\infty)=b \in \R$.
\end{proposition}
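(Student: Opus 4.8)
The plan is to use the Nirenberg-type substitution $v = F(u)$, which reduces equation (\ref{eq_fritz_general}) to the free wave equation, and then to reduce the whole existence question to a lower bound on the free evolution. Since $F'(u) = e^{-\int_0^u f}$ and $F''(u) = -f(u) F'(u)$, for any smooth $u$ one has the identity
\[
v_{tt} - \Delta v = F''(u)(u_t^2 - |\nabla u|^2) + F'(u)(u_{tt} - \Delta u) = F'(u)\big(u_{tt} - \Delta u - f(u)(u_t^2 - |\nabla u|^2)\big).
\]
As $F' > 0$, a smooth $u$ solves (\ref{eq_fritz_general}) if and only if $v = F(u)$ solves $v_{tt} - \Delta v = 0$ with data $v_0 = F(u_0)$, $v_1 = F'(u_0) u_1$. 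Because $F$ is a smooth increasing bijection from $\R$ onto $(a, +\infty)$, the nonlinear solution is recovered by $u = F^{-1}(v)$ wherever $v > a$, and it is smooth there by the inverse function theorem. Thus everything comes down to keeping $v$ above the level $a$.

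First I would solve the free wave equation for $v$, which is globally well posed and smooth for smooth data, and write Kirchhoff's formula via the normalized spherical mean $\langle h\rangle_{x,t} = \frac{1}{4\pi}\int_{|\omega|=1} h(x + t\omega)\dd S(\omega)$ as
\[
v(x,t) = \langle v_0\rangle_{x,t} + t\,\langle \omega\cdot\nabla v_0\rangle_{x,t} + t\,\langle v_1\rangle_{x,t}.
\]
The key point is that the pure average of $v_0$ enters with coefficient exactly one, so $\langle v_0\rangle_{x,t} \geq \inf v_0 = F(\inf u_0)$, while the other two terms are genuinely first order in $t$: since $\langle\cdot\rangle_{x,t}$ is a probability average on the sphere, $|\langle\omega\cdot\nabla v_0\rangle| \leq \|\nabla v_0\|_{L^\infty}$ and $|\langle v_1\rangle| \leq \|v_1\|_{L^\infty}$ (and $\langle h\rangle_{x,t} = \langle h\rangle_{x,|t|}$ handles $t<0$). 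This yields
\[
v(x,t) \geq F(\inf u_0) - |t|\big(\|\nabla v_0\|_{L^\infty} + \|v_1\|_{L^\infty}\big).
\]

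Next I would push the data norms back to $u$. Since $u_0 \in L^\infty$, its range is bounded and $\sup_x F'(u_0(x)) < \infty$, whence $\|\nabla v_0\|_{L^\infty} + \|v_1\|_{L^\infty} = \|F'(u_0)\nabla u_0\|_{L^\infty} + \|F'(u_0) u_1\|_{L^\infty} \leq \sup_x F'(u_0(x))\,(\|\nabla u_0\|_{L^\infty} + \|u_1\|_{L^\infty})$. Because $\inf u_0 > -\infty$ and $F(-\infty)=a$, the gap $F(\inf u_0) - a$ is strictly positive, so the lower bound forces $v(x,t) > a$ for all $x$ whenever $|t| < T$, with $T$ as in (\ref{time}) (here $u_t(0) = u_1$). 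On $\R^3 \times (-T, T)$ we may therefore set $u = F^{-1}(v)$; it is smooth, solves (\ref{eq_fritz_general}), and satisfies the data since $F^{-1}(F(u_0)) = u_0$ and $v_t(0) = F'(u_0) u_1$ forces $u_t(0) = u_1$. For the boundedness claim, on a slab $\R^3 \times [-t, t]$ with $t < T$ the same estimate gives a uniform lower bound $v \geq m > a$, while the trivial upper bound $v \leq \|v_0\|_{L^\infty} + t(\|\nabla v_0\|_{L^\infty} + \|v_1\|_{L^\infty}) =: M$ is finite, so continuity of $F^{-1}$ on $[m, M] \subset (a, \infty)$ gives $u = F^{-1}(v) \in L^\infty_{t,x}$. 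The case $F(+\infty) = b \in \R$ is symmetric: one bounds $v$ from above using $\langle v_0\rangle_{x,t} \leq F(\sup u_0) < b$ and inverts $F$ on $(-\infty, b)$.

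I expect the only step needing genuine care to be the representation-formula estimate: one must confirm that the zeroth-order spherical-average term of $v_0$ really carries coefficient one (so it is controlled from below by $\inf v_0$), that the derivative term $t\langle\omega\cdot\nabla v_0\rangle$ and the velocity term are truly $O(|t|)$ with the $L^\infty$ data norms, and that these bounds are uniform in $x$ so that $v > a$ holds on all of $\R^3$ simultaneously. The remaining ingredients — global solvability of the free equation, smoothness of $F^{-1}$ where $v>a$, and the back-substitution identity — are routine.
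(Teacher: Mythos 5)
Your proof is correct and follows essentially the same route as the paper: the substitution $v=F(u)$ reducing to the free wave equation, a lower bound $v \geq F(\inf u_0) - |t|\,(\|\nabla v_0\|_{L^\infty}+\|v_1\|_{L^\infty})$ forcing $v>a$ on $(-T,T)$, and inversion by $F^{-1}$. The only cosmetic difference is that you re-derive this lower bound inline from Kirchhoff's formula, whereas the paper quotes it as its Lemma \ref{local_bound} (itself proved from the same representation formula).
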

Conversely, following the argument in the proof, one can easily construct examples of smooth initial data $(u_0, u_1)$ with $u_0$, $\nabla u_0$, or $u_1 \not \in L^\infty$, which lead to blow-up at time $0$ (hence the failure of local existence). We omit the construction. For more such results concerning the lack of local existence, see \cite{lindblad2}.

\subsection{Monomial nonlinearity} The other equation we study in this paper is the focusing semilinear wave equation on $\R^{3+1}$ with a monomial nonlinearity
\be\lb{eq_sup}
u_{tt} - \Delta u - |u|^N u =0,~u(0)=u_0,~u_t(0)=u_1.
\ee
This equation is $\dot H^{1/2}$-critical for $N=2$, energy-critical for $N=4$, and energy-supercritical for $N>4$. In general, the equation is $\dot H^{s_c}$-critical, where $s_c=3/2-2/N$.

An equivalent formulation is
\be\lb{weak}
u(t) = \cos(t\sqrt{-\Delta}) u_0 + \frac {\sin(t\sqrt{-\Delta})}{\sqrt{-\Delta}} u_1 + \int_0^t \frac {\sin((t-s)\sqrt{-\Delta})}{\sqrt{-\Delta}} |u(s)|^N u(s) \dd s.
\ee
Functions $u$ that satisfy this identity are sometimes called mild solutions of (\ref{eq_sup}); if, in addition, they are sufficiently smooth, then they become classical solutions.

For equation (\ref{eq_sup}) we prove the existence of global strong solutions in the sense of (\ref{weak}) for a suitable class of large initial data. All solutions are Borel measurable functions and defined everywhere, instead of almost everywhere.

In some cases the range of the solutions is $\R$, but in other cases it is $[0, \infty]$, i.e.~they can be infinite on a set of positive measure, so they may be said to blow up in finite time in the usual sense. This is similar to (and due to the limitations of) the Lebesgue integral, which is defined either for finite-valued or for nonnegative functions --- and in the former case the function has to be dominated by an integrable nonnegative function.

In both cases, these will be solutions in the weak (distributional) sense, but in the nonnegative case only when tested against nonnegative test functions (in the sense that both sides will either be infinite or have the same finite value).

Baire measurable functions are, by definition, those functions that are pointwise limits of sequences of continuous functions. On Euclidean space, a function is Baire measurable if and only if it is Borel measurable.
\begin{lemma} Let $f:\R^3 \times [0, \infty) \to [0, \infty]$ be a Borel measurable function. Then
$$
F_f(y, t) := \int_0^t \frac {\sin((t-s)\sqrt{-\Delta})}{\sqrt{-\Delta}} f(x, s) \dd s
$$
is also nonnegative and Borel measurable. If in addition $f$ is lower semicontinuous, then $F_f$ is also lower semicontinuous.
\end{lemma}
\begin{proof} As the kernel of the sine propagator is a positive measure, the proof is based on monotone convergence. First, it is enough to show $F_{f_n}$ is Borel measurable for $f_n=\max(f, n)$ for each $n \in \N$, since $f$ is the increasing pointwise limit of $f_n$.

Next, for fixed $n$, $f_n$ is the pointwise limit of some sequence of uniformly bounded continuous functions $0 \leq g_m \leq n$. But then, by dominated convergence, $F_{g_m}(y, t) \to F_{f_n}(y, t)$ for each $y$ and $t$. Since $F_{g_m}$ are Borel measurable (continuous, in fact), so is $F_{f_n}$.

If $f \geq 0$ is lower semicontinuous, then it is the increasing limit of a sequence of continuous functions $f_n \geq 0$. Thus, $F_f$ will be the increasing pointwise limit of $F_{f_n}$, which are continuous, so $F_f$ will be lower semicontinuous too.
\end{proof}

As an application, we state a criterion for the global existence, dispersion, and scattering of solutions to equation (\ref{eq_sup}) in the energy-critical case $N=4$. This criterion extends the well-known one of \cite{kenig}, see below.

We also obtain a different criterion for global existence, dispersion, and scattering that applies to the whole $\dot H^{1/2}$-supercritical ($N>2$) range.

The most important feature of equation (\ref{eq_sup}) that we use is that the nonlinearity is increasing. Conceivably, the same methods could also work for other types of nonlinearities.

Some of our results also hold in the defocusing case, i.e.~when the nonlinearity in (\ref{eq_sup}) has the plus sign.

Equation (\ref{eq_sup}) is well-studied. We only give a brief survey of the known results.

The global existence of solutions for small initial data was proved by, among others, \cite{pecher}, \cite{lindblad}, and \cite{shst}. This equation can have ODE-type blow-up in finite time for any $N>0$, for sufficiently large initial data. Another approach to blow-up is due to \cite{levine}.

For $N=4$, the energy-critical case, equation (\ref{eq_sup}) also has soliton solutions $u(x, t)=W(x) \in \dot H^1$, where $W$ solves the semilinear elliptic equation
\be\lb{w}
-\Delta W=W^5.
\ee
We distinguish the ground state soliton $Q>0$ given by the explicit formula
\be\lb{Q}
Q(x) = \frac 1 {\bigg(1+\displaystyle\frac {|x|^2} 3\bigg)^{1/2}}.
\ee
In fact, each such soliton is part of an infinite family of solitons obtained by rescaling, but this will play no role in our proof.

Recently, \cite{kenig} classified all solutions smaller than the ground state soliton for the energy-critical equation. Define the energy of a solution by
\be\lb{energy_focusing}
E[u]=\int_{\R^3 \times \{t\}} \frac {u_t^2 + |\nabla u|^2} 2 - \frac {u^6} 6 \dd x.
\ee
The result of \cite{kenig} states that if $E[u] < E[Q]$ and $\|\nabla u_0\|_{L^2} < \|\nabla Q\|_{L^2}$ then the solution scatters, while if $E[u] < E[Q]$ and $\|\nabla u_0\|_{L^2} > \|\nabla Q\|_{L^2}$ then the solution blows up.

This classification was then extended, in a less precise manner, by \cite{dkm} to radial solutions of arbitrary size. These results also extend to the energy-supercritical case (where the lack of solitons makes the classification simpler), but only under the assumption that the solution stays bounded in the critical Sobolev norm; see \cite{dkm2}, \cite{dola}, \cite{duro}.

Another approach to the energy-critical equation (\ref{eq_sup}) belongs to \cite{krsc}, \cite{kst}, \cite{kns}, \cite{kns3}, \cite{bec}, and \cite{kns2}, which studied solutions in a neighborhood of the ground state soliton.

Results for the focusing supercritical equation (\ref{eq_sup}) include \cite{krsc2}, \cite{becsof}, and \cite{loy}. These papers construct particular classes of large global solutions.

Many more results are known in the defocusing case
$$
u_{tt} - \Delta u + |u|^N u =0,~u(0)=u_0,~u_t(0)=u_1
$$
((\ref{eq_sup}) with a plus in front of the nonlinearity). For the energy-critical equation,  global well-posedness was proved by \cite{struwe} and \cite{grillakis}. In the energy-supercritical case, some results --- \cite{tao}, related to an idea from \cite{gsv}, \cite{roy1}, \cite{roy2}, and \cite{struwe2} --- refer to slightly supercritical equations, while others --- \cite{keme2}, \cite{kivi1}, \cite{kivi2}, \cite{miao}, \cite{bul1}, \cite{bul2}, \cite{bul3} --- are conditional results. The paper \cite{becsof2} uses a modified supercritical nonlinearity. Also see the recent result \cite{tao2}, which shows blow-up for a defocusing system of supercritical wave equations.

We state our next couple of results for a class of rough initial data and rough solutions. Henceforth, we assume that initial data are Borel measurable and our hypotheses about them hold a.e., as spelled out below in Proposition \ref{maximum}.

We also adopt the following convention: a backward light cone is one whose base is at $t=0$, while a forward light cone is one opening away from $t=0$. Forward and backward light cones intersect along sets of zero measure on each.

The following is the statement of our most general result in this context, which is based on monotone convergence.

\begin{proposition}\lb{maximum} Assume that $N \geq 0$, $(u_0, u_1)$ are a pair of initial data, and \\
i. either radial and outgoing according to Definition \ref{def_outgoing}, $u_0$ is Borel measurable, and $u_0 \geq 0$ a.e.;\\
ii. or radial, $u_0$, $(u_0)_r$, and $u_1$ are Borel measurable, and $(u_0)_r + u_0/r \geq |u_1|$ a.e.;\\
iii. or not necessarily radial, $u_0$, $\nabla u_0$, and $u_1$ are Borel measurable, and $u_0 \geq 0$, $u_1 \geq |\nabla u_0|$ a.e.;\\
iv. or not necessarily radial, $-\Delta u_0$ and $\nabla u_1$ are Borel measurable, and $-\Delta u_0 \geq |\nabla u_1|$.

Then there exists a global solution $u$, on $\R^3 \times [0, \infty)$ in cases i and iii and on $\R^{3+1}$ in cases ii and iv, to equation (\ref{weak}), having $(u_0, u_1)$ as initial data. The solution $u$ is Borel measurable and $\Ran u \subset [0, +\infty]$. Moreover, $u$ is the smallest nonnegative solution to equation (\ref{weak}) with initial data $(u_0, u_1)$.

Consider two solutions $u$ and $v$ obtained by this method. In the first case if $u_0 \geq v_0 \geq 0$ a.e., in the second case if
$$
(u_0)_r + u_0/r \pm u_1 \geq (v_0)_r + v_0/r \pm v_1 \geq 0~a.e.,
$$
in the third case if
$$
u_0 \geq v_0,~u_1-|\nabla u_0| \geq v_1 + |\nabla v_0| \geq v_1 - |\nabla v_0| \geq 0~a.e.,
$$
and in the fourth case if
$$
-\Delta u_0 - |\nabla u_1| \geq -\Delta v_0 + |\nabla v_1| \geq -\Delta v_0 - |\nabla v_1| \geq 0~a.e.,
$$
then one has for the corresponding solutions that $u \geq v \geq 0$.

Let $u_{lin}$ be the linear evolution of the initial data. If $u_{lin}$ is lower semicontinuous, then $u$ is lower semicontinuous. If $(u_n)_{lin} \uparrow u_{lin}$ pointwise a.e.~on each backward light cone (see Lemma \ref{pointwise}), then $u_n \uparrow u$ pointwise a.e.~on each backward light cone.
\end{proposition}

\begin{proof}
Consider any other solution $\tilde u \geq 0$ to (\ref{weak}) with the same initial data $(u_0, u_1)$. Clearly then $\tilde u \geq u_{lin}=u^1$ and by induction we get that $\tilde u \geq u^n$ for each $n$, so $\tilde u \geq \lim_{n \to \infty} u^n = u$.

Consider a sequence $(u_n)_{lin} \uparrow u_{lin}$ pointwise a.e.~on each backward light cone. Then by induction $(u_n)^m \uparrow u^m$ for each $m$, so in the limit $u_n \uparrow u$ pointwise a.e.~on each backward light cone.
\end{proof}

Thus, at least in some cases, solutions depend monotonically on the initial data. Since we construct them explicitly, these solutions to (\ref{weak}) are uniquely defined for each pair of initial data. They also have a unique characterization, as the smallest nonnegative solutions to (\ref{weak}). However, they are not unique in this class: to give a trivial example, $u(x, t)=+\infty$ on $\R^3 \times \R^*$ is a solution to (\ref{weak}) for any $(u_0, u_1)$. 

In this generality, solutions produced by Proposition \ref{maximum} are weak solutions and some of them actually blow up in finite time in the usual sense. However, as we shall prove below, imposing stronger conditions on such solutions leads to better behavior, including regularity and uniqueness.

Our approach to the study of equation (\ref{eq_sup}) seems to be new. One can hope that this unified treatment of global existence and blow-up will lead to a better understanding of the boundary between them.

After monotone convergence, the next step is using dominated convergence. This works both in the focusing and in the defocusing case, i.e.~for
\be\lb{classical_pm}
v_{tt} - \Delta v \pm |v|^N v =0,~v(0)=v_0,~u_t(0)=v_1.
\ee
In fact, the $\pm$ sign can be replaced by any number on the unit circle.

For the next result we still focus on mild solutions, which satisfy the equivalent (for classical solutions) formulation
\be\lb{weakpm}
v(t) = \cos(t\sqrt{-\Delta}) v_0 + \frac {\sin(t\sqrt{-\Delta})}{\sqrt{-\Delta}} v_1 \pm \int_0^t \frac {\sin((t-s)\sqrt{-\Delta})}{\sqrt{-\Delta}} |v(s)|^N v(s) \dd s.
\ee
Reverting to the usual notions of solution existence and blow-up, one has the following criterion (whose proof we omit):
\begin{proposition}\lb{global_existence} Assume $N \geq 0$ and consider initial data $(u_0, u_1)$ and $(v_0, v_1)$, such that $(u_0, u_1)$ give rise to a nonnegative, Borel measurable, a.e. finite solution $u$ for equation (\ref{weak}) on $\R^3 \times I$, on some interval $I$ around $t=0$. If almost everywhere\\
i. $(u_0, u_1)$ and $(v_0, v_1)$ are radial and outgoing and $u_0 \geq |v_0|$\\
ii. or $(u_0, u_1)$ and $(v_0, v_1)$ are radial and $(u_0)_r + u_0/r - |u_1| \geq |(v_0)_r + v_0/r| + |v_1|$\\
iii. or $u_1-|\nabla u_0| \geq |v_1| + |\nabla v_0|$\\
iv. or $-\Delta u_0-|\nabla u_1| \geq |\Delta v_0| + |\nabla v_1|$,\\
then $(v_0, v_1)$ also give rise to an a.e.~finite solution $v$ on $\R^3 \times I$ to the equation (\ref{weakpm}) with either the focusing or the defocusing sign, such that $|v| \leq u$.

Let $v_{lin}$ be the linear evolution of initial data. Then, when $(v_n)_{lin} \to v_{lin}$ pointwise a.e.~on each backward light cone, see Lemma \ref{pointwise}, it follows that $v_n \to v$ pointwise a.e.~on each backward light cone.

Finally, assume in addition that $(v_0, v_1)$ are real-valued. Then the solution $v$ of the focusing equation is unique in the class $|v| \leq u$.
\end{proposition}

If $u$ is a.e.~finite, it follows that $u$ is also integrable, hence a.e.~finite, on each backward light cone in $\R^3 \times I$. By our construction, then, the dominated solution $v$ will have well-defined pointwise values, possibly except at those points.

Here we assume that $I \subset [0, \infty)$ in cases i and iii.

If $u$ disperses etc.~then so does $v$. If a Lebesgue or Strichartz norm of $v$ blows up in finite time, then the same is true for $u$.

In order to take full advantage of this criterion, we need to know beforehand that a certain positive solution $u$ exists globally or that a solution $v$ blows up. The simplest choice is the ground state soliton $Q$ defined by (\ref{Q}):
$$
Q(x) = \frac 1 {\bigg(1+\displaystyle\frac {|x|^2} 3\bigg)^{1/2}}.
$$
Note that both $(r Q)_r > 0$ and $-\Delta Q = Q^5 > 0$. A more refined choice for comparison is $(1\pm \epsilon) Q$.

The following is a straightforward corollary of Proposition \ref{global_existence}.

\begin{corollary}\lb{critic} Assume $N=4$ and consider initial data $(u_0, u_1)$ such that\\
i. either (in the radial case) $|(u_0)_r+u_0/r|+|u_1| \leq Q_r+Q/r$\\
ii. or (in the general case) $|\Delta u_0| + |\nabla u_1| \leq -\Delta Q = Q^5$.\\
Then $(u_0, u_1)$ give rise to a global solution $u$ of equation (\ref{weakpm}) on $\R^{3+1}$
$$
v(t) = \cos(t\sqrt{-\Delta}) v_0 + \frac {\sin(t\sqrt{-\Delta})}{\sqrt{-\Delta}} v_1 \pm \int_0^t \frac {\sin((t-s)\sqrt{-\Delta})}{\sqrt{-\Delta}} |v(s)|^4 v(s) \dd s,
$$
such that $|u(x, t)| \leq Q(x)$.

If in addition $|(u_0)_r+u_0/r|+|u_1| \leq (1-\epsilon)(Q_r+Q/r)$ or $|\Delta u_0| + |\nabla u_1| \leq (1-\epsilon)Q^5$, then $u$ disperses ($\|u\|_{L^8_{t, x}} < \infty$) and scatters.

If on the contrary $(u_0)_r+u_0/r-|u_1| \geq (1+\epsilon)(Q_r+Q/r)$ or $-\Delta u_0 - |\nabla u_1| \geq (1+\epsilon)Q^5$, then the solution $u$ of (\ref{weak}) blows up in finite time, in the sense that there exist finite $a<0<b$ such that $\|u\|_{L^8_{t, x}(\R^3 \times (a, 0))} = \|u\|_{L^8_{t, x}(\R^3 \times (0, b))} = +\infty$.
\end{corollary}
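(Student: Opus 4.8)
My plan is to treat all three assertions as comparisons against the explicit one-parameter family of data $(\lambda Q, 0)$, $\lambda > 0$, transferring the Kenig--Merle dichotomy \cite{kenig} to our solution $u$ through the monotone dependence on data supplied by Proposition \ref{maximum} and Corollary \ref{global_existence}.

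First I would record the behaviour of the comparison solutions $\psi_\lambda$ with data $(\lambda Q, 0)$. Since $Q$ is static and solves $-\Delta Q = Q^5$ (see (\ref{w})), one has $\|\nabla(\lambda Q)\|_{L^2} = \lambda \|\nabla Q\|_{L^2}$, and writing $A := \|\nabla Q\|_{L^2}^2 = \|Q\|_{L^6}^6$ the energy (\ref{energy_focusing}) of $(\lambda Q, 0)$ equals $A(\lambda^2/2 - \lambda^6/6)$. The function $\rho(\lambda) = \lambda^2/2 - \lambda^6/6$ has $\rho(1) = 1/3 = E[Q]/A$ and $\rho'(\lambda) = \lambda(1-\lambda^4)$, so $\rho(\lambda) < \rho(1)$ for every $\lambda \neq 1$; hence $E[\lambda Q, 0] < E[Q]$ for all $\lambda \neq 1$, while $\|\nabla(\lambda Q)\|_{L^2}$ is smaller than, or larger than, $\|\nabla Q\|_{L^2}$ according as $\lambda < 1$ or $\lambda > 1$. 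By \cite{kenig}, $\psi_\lambda$ therefore scatters (so $\|\psi_\lambda\|_{L^8_{t,x}} < \infty$) when $\lambda < 1$, and blows up in finite time when $\lambda > 1$; since the datum has $u_1 = 0$ and the equation is time-reversible, the blow-up is two-sided.

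The three assertions then correspond to $\lambda = 1$, $\lambda = 1-\epsilon$, and $\lambda = 1+\epsilon$. For the first, $Q$ itself is a nonnegative finite global solution of (\ref{eq_sup}) with $Q_r + Q/r - 0 = Q_r + Q/r$ and $-\Delta Q - 0 = Q^5$; taking the known dominating solution in Corollary \ref{global_existence} to be $Q$ (case ii in the radial setting, case iv otherwise), the hypotheses $|(u_0)_r + u_0/r| + |u_1| \leq Q_r + Q/r$ and $|\Delta u_0| + |\nabla u_1| \leq Q^5$ are exactly the required comparison inequalities, yielding a global $u$ with $|u| \leq Q$. For dispersion I would dominate $u$ from above by $\psi_{1-\epsilon}$: the datum $((1-\epsilon)Q, 0)$ satisfies the first-part hypotheses (as $(1-\epsilon)Q \leq Q$), so $0 \leq \psi_{1-\epsilon} \leq Q$ is a genuine solution, and it scatters by the previous paragraph; using it as the dominating datum in Corollary \ref{global_existence}, the strict hypotheses give $|u| \leq \psi_{1-\epsilon}$ pointwise, whence $\|u\|_{L^8_{t,x}} \leq \|\psi_{1-\epsilon}\|_{L^8_{t,x}} < \infty$, and $u$ disperses and scatters. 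For blow-up I would instead dominate $\psi_{1+\epsilon}$ from below by $u$: the reversed hypotheses $(u_0)_r + u_0/r - |u_1| \geq (1+\epsilon)(Q_r + Q/r)$ and $-\Delta u_0 - |\nabla u_1| \geq (1+\epsilon)Q^5$ are precisely what the monotonicity of Proposition \ref{maximum} needs to conclude $u \geq \psi_{1+\epsilon} \geq 0$ for all $t$; since $\psi_{1+\epsilon}$ has $\|\psi_{1+\epsilon}\|_{L^8_{t,x}(\R^3 \times (0,b))} = \|\psi_{1+\epsilon}\|_{L^8_{t,x}(\R^3 \times (-b,0))} = +\infty$ for some finite $b$, the pointwise lower bound forces $\|u\|_{L^8_{t,x}}$ to be infinite on the same slabs, which is the claimed blow-up with $a = -b$.

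The only external input is the sharp dichotomy of \cite{kenig}, and I would invoke it solely for the explicit radial data $(1 \pm \epsilon)Q$, so that nothing about the energy or critical norm of $u$ itself needs to be computed. The step most in need of care is keeping the direction of the comparison straight in each regime --- domination from above to push the finiteness of the $L^8_{t,x}$ norm downward in the dispersive case, and domination from below to push its infiniteness upward in the blow-up case --- together with checking that the monotonicity of Proposition \ref{maximum} propagates in both time directions, so that the blow-up indeed occurs on a two-sided interval $(a,b)$.
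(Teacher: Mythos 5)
Your proposal is correct and follows essentially the same route as the paper: comparison with the family $((1\pm\epsilon)Q,0)$, the energy computation showing $E[(1\pm\epsilon)Q,0]<E[Q]$, the Kenig--Merle dichotomy for those data, and the monotone comparison of Proposition \ref{maximum}/Corollary \ref{global_existence} to transfer dispersion downward and blow-up upward. Your version is slightly more explicit than the paper's (global energy computation in $\lambda$ rather than a second-derivative test at $\epsilon=0$, and careful tracking of the comparison directions and the two-sidedness via $u_1=0$ and time reversibility), but the underlying argument is the same.
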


\begin{observation} This criterion goes in some cases beyond the result of \cite{kenig}. Indeed, consider a solution $u$ with initial data $(u_0=0, u_1=Q_r+Q/r)$. Since the first component is zero, the energy of $u$, defined by (\ref{energy_focusing}), is given by
$$
E[u] = \frac 1 2 \int_{\R^3} u_1^2 \dd x = 2\pi \int_0^\infty \bigg(\frac 1 {(1+r^2/3)^{3/2}r}\bigg)^2 r^2 \dd r = \frac {3\sqrt 3 \pi^2} 8.
$$
At the same time,
$$
\frac 1 2 \|\nabla Q\|_{L^2}^2 = \frac 1 2 \int_{\R^3} Q_r^2 \dd x = 2\pi \int_0^\infty \bigg(\frac {r/3}{(1+r^2/3)^{3/2}}\bigg)^2 r^2 \dd r = \frac {3\sqrt 3 \pi^2} 8.
$$
One can also see directly that
$$
\int_{\R^3} (2Q_rQ/r+Q^2/r^2) \dd x = 4\pi \int_0^\infty 2Q_rQr+Q^2 \dd r = 0,
$$
so
$$
\|\nabla Q\|_{L^2}^2 = \int_{\R^3} Q_r^2 \dd x = \int_{\R^3} (Q_r+Q/r)^2 \dd x = \|u_1\|_{L^2}^2.
$$
Since $\|\nabla Q\|_{L^2}^2=\|Q\|_{L^6}^6$, it follows that $E[Q] = \frac 1 3 \|\nabla Q\|_{L^2}^2$, so $E[u] = \frac 3 2 E[Q]$.

The result of \cite{kenig} refers only to solutions $u$ for which $E[u] \leq E[Q]$, so it does not apply to $u$. Nor does the result of \cite{kns3} apply, since it refers to solutions $u$ for which $E[u] \leq E[Q] + \epsilon^2$, $\epsilon << 1$.

On the other hand, Corollary \ref{critic} shows that $u$ exists globally on $\R^{3+1}$. If we consider instead the solution $u_{\epsilon}$ with initial data $(u_0=0, u_1=(1- \epsilon)(Q_r+Q/r))$, then Corollary \ref{critic} tells us that $u_\epsilon$ exists globally and disperses.
\end{observation}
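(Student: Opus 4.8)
The plan is to verify the two assertions packaged in the observation: the identity $E[u] = \tfrac32 E[Q]$ for the data $(u_0, u_1) = (0, Q_r + Q/r)$, and the claim that this data escapes the hypotheses of \cite{kenig} and \cite{kns3} while still being covered by Corollary \ref{critic}. First I would exploit the vanishing of the first component: since $u_0 = 0$, both the gradient and the potential terms in (\ref{energy_focusing}) drop out, leaving $E[u] = \tfrac12 \|u_1\|_{L^2}^2 = \tfrac12 \|Q_r + Q/r\|_{L^2}^2$.

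The key algebraic step is to reduce $\|Q_r + Q/r\|_{L^2}^2$ to $\|\nabla Q\|_{L^2}^2 = \|Q_r\|_{L^2}^2$. Expanding the square produces the cross and angular contributions $\int_{\R^3}(2 Q_r Q/r + Q^2/r^2)\dd x$, and I would show this integral vanishes. Passing to radial coordinates with the weight $r^2$, the integrand becomes $2 Q_r Q r + Q^2 = \frac{d}{dr}(r Q^2)$, a perfect derivative; the boundary terms vanish because $r Q^2 \to 0$ both as $r \to 0$ (the factor $r$) and as $r \to \infty$ (using $Q(r) \sim \sqrt3/r$, so $r Q^2 \sim 3/r$). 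Hence $\|u_1\|_{L^2}^2 = \|\nabla Q\|_{L^2}^2$ and $E[u] = \tfrac12 \|\nabla Q\|_{L^2}^2$.

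Next I would compute $E[Q]$ via the defining elliptic equation. Pairing $-\Delta Q = Q^5$ with $Q$ and integrating by parts gives the Nehari relation $\|\nabla Q\|_{L^2}^2 = \|Q\|_{L^6}^6$, so that $E[Q] = \tfrac12\|\nabla Q\|_{L^2}^2 - \tfrac16\|Q\|_{L^6}^6 = \tfrac13\|\nabla Q\|_{L^2}^2$. Combining with the previous paragraph yields $E[u] = \tfrac12\|\nabla Q\|_{L^2}^2 = \tfrac32 E[Q]$, the claimed identity.

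Finally, for the comparison with the literature I would note that $\tfrac32 E[Q]$ exceeds $E[Q] + \epsilon^2$ for all sufficiently small $\epsilon$, so neither the threshold $E[u] < E[Q]$ of \cite{kenig} nor the near-threshold $E[u] \le E[Q] + \epsilon^2$ of \cite{kns3} holds. To apply Corollary \ref{critic} I would check its radial hypothesis: with $u_0 = 0$ and $u_1 = Q_r + Q/r$ one has $|(u_0)_r + u_0/r| + |u_1| = |Q_r + Q/r| = Q_r + Q/r$, the last equality because $(rQ)_r = r(Q_r + Q/r) > 0$; this meets the borderline condition of Corollary \ref{critic} and gives global existence with $|u| \le Q$. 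Replacing $u_1$ by $(1-\epsilon)(Q_r + Q/r)$ produces strict inequality, so the dispersion/scattering clause of Corollary \ref{critic} applies to $u_\epsilon$. The only genuinely nontrivial point is the perfect-derivative cancellation of the cross term in the second paragraph; the rest is direct substitution together with a standard Pohozaev/Nehari computation.
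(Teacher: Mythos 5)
Your proposal is correct and takes essentially the same route as the paper: the cross-term cancellation $\int_{\R^3}(2Q_rQ/r+Q^2/r^2)\dd x=0$ (which you justify cleanly via the perfect derivative $(rQ^2)'$ and the decay $Q(r)\sim \sqrt3/r$), the Nehari identity $\|\nabla Q\|_{L^2}^2=\|Q\|_{L^6}^6$ yielding $E[Q]=\tfrac13\|\nabla Q\|_{L^2}^2$, and the borderline versus strict applications of Corollary \ref{critic} for global existence of $u$ and dispersion of $u_\epsilon$. The only difference is that you omit the paper's explicit evaluations of the two radial integrals as $\tfrac{3\sqrt3\pi^2}{8}$, which serve there only as a redundant numerical check and are not needed for the ratio $E[u]=\tfrac32 E[Q]$.
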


It is also possible to construct a small perturbation $u_0$ of $Q$ such that $0 \leq -\Delta u_0 \leq (1-\epsilon)(-\Delta Q)$ or $-\Delta u_0 \geq (1+\epsilon)(-\Delta Q)$ and $E[(u_0, 0)]>E[Q]$. In both cases, the behavior of the solution can be predicted using Corollary \ref{critic}, but not using the results of \cite{kenig}. The connection with \cite{kns3} remains to be studied.

Corollary \ref{critic} depends essentially on the ground state soliton $Q$, which is only guaranteed to exist in the energy-critical case $N=4$, and on the results of \cite{kenig}.

However, more generally, for $N>2$ there exists another family of positive stationary solutions to equation (\ref{eq_sup}), which can be called \emph{singular solitons}.

Namely, we look for positive radial solutions of the form $Q_N=C_N|x|^\alpha$ of the semilinear elliptic equation (\ref{w})
$$
-\Delta Q_N = Q_N^{N+1}.
$$
Plugging our ansatz in the equation, we obtain
$$
-\alpha(\alpha+1)C_N r^{\alpha-2} = C_N^{N+1} r^{\alpha(N+1)}.
$$
Hence $\alpha=-\frac 2 N$ and $C_N=\big(-\alpha(\alpha+1)\big)^{1/N}=\big(2(N-2)/N^2\big)^{1/N}$, so
\be\lb{qn}
Q_N(x) = \bigg(\frac {2(N-2)}{N^2}\bigg)^{1/N} |x|^{-2/N}.
\ee
Note that $C_N>0$ implies that $N>2$. $Q_N$ is scaling-invariant, so by rescaling it we do not obtain anything new, but we can get other solitons by translating it.

$Q_N$ never has finite energy, but it has locally finite energy for $N>4$. $Q_N$ logarithmically fails to be in the critical Sobolev space $\dot H^{s_c}$, where $s_c=3/2-2/N$, being instead in the Besov space $\dot B^{s_c}_{2, \infty} \subset L^{p_c, \infty}$.

$N>2$ is the $\dot H^{1/2}$-supercritical case ($s_c>1/2$), which includes the energy-critical and energy-supercritical cases. In particular, in the energy-critical case, $Q_4(x) = \frac 1 {\sqrt 2} |x|^{-1/2}$. Comparing this singular soliton with the finite energy soliton $Q$ defined by (\ref{Q}), we see that $Q_4(x) \leq Q(x)$ for $|x| \in [3-\sqrt 6, 3 + \sqrt 6]$ and $Q_4(x) > Q(x)$ otherwise. Of course, this depends on the scaling of $Q$, but regardless of scaling we see that neither soliton dominates the other.

Using the singular soliton $Q_N$ we can formulate another criterion for global existence of solutions to (\ref{weakpm}). Note that $(rQ_N)_r>0$ and $-\Delta Q_N=Q_N^{N+1} > 0$.
\begin{corollary}\lb{supercritic} Assume $N>2$ and consider initial data $(u_0, u_1)$ to equation (\ref{weakpm})
$$
u(t) = \cos(t\sqrt{-\Delta}) u_0 + \frac {\sin(t\sqrt{-\Delta})}{\sqrt{-\Delta}} u_1 \pm \int_0^t \frac {\sin((t-s)\sqrt{-\Delta})}{\sqrt{-\Delta}} |u(s)|^N u(s) \dd s.
$$
i. If $|(u_0)_r+u_0/r|+|u_1| \leq (Q_N)_r+Q_N/r$ (in the radial case)\\
ii. or if $|\Delta u_0| + |\nabla u_1| \leq -\Delta Q_N = Q_N^{N+1}$ (in the general case),\\
then the corresponding solution $u$ exists globally on $\R^{3+1}$ and $|u(x, t)| \leq Q_N(x)$. Moreover, in the focusing case, $u$ is the unique solution of (\ref{weak}) such that $|u| \leq Q_N$.

At each time $t$, $u(t) - u_{lin}(t) \in \dot B^{s_c-1}_{2, \infty}$, where $u_{lin}$ is the linear evolution of $(u_0, u_1)$, with
$$
\|u(t) - u_{lin}(t)\|_{\dot B^{s_c-1}_{2, \infty}} \les t.
$$
\end{corollary}
The initial data need not have finite energy or finite critical Sobolev $\dot H^{s_c} \times \dot H^{s_c-1}$ norm, where $s_c=3/2-2/N$. However, our nonradial condition implies they are bounded in the Besov space $\dot B^{s_c}_{2, \infty} \times \dot B^{s_c-1}_{2, \infty}$, so the same is true for their linear evolution $u_{lin}$.

In the radial case, our condition implies that $\nabla u,~u_t \in L^{3N/(N+2), \infty}$ and, more generally, the first-order derivatives of $u_{lin}$ are dominated by those of the linear evolution of $Q_N$. However, below we shall focus more on the nonradial case.


These bounds do not prevent type II blow-up in the energy norm or imply scattering (and indeed $Q_N$ itself does not scatter) or energy conservation. Thus, in this generality these are only weak solutions. We shall have more to say about this in Corollary \ref{weak_solutions}. For more results about weak solutions to supercritical wave equations, see Ibrahim--Majdoub--Masmoudi \cite{imm}.


Conditions i and ii are scaling-invariant. Note that the second condition can be expressed in terms of the weighted $|x|^{-2-2/N} \dot W^{2, \infty} \times |x|^{-2-2/N} \dot W^{1, \infty}$ norm of the initial data, i.e.
\be\lb{cond_ii}
\||x|^{2+2/N}\Delta u_0\|_{L^\infty} + \||x|^{2+2/N}\nabla u_1\|_{L^\infty} \leq C_N^{N+1} = \bigg(\frac {2(N-2)}{N^2}\bigg)^{1 + \frac 1 N}.
\ee
The conclusion can also be expressed in terms of a weighted $|x|^{-2/N} L^\infty$ norm, i.e.~$\||x|^{2/N} u\|_{L^\infty_{t, x}} \leq C_N = \big(2(N-2)/N^2\big)^{1/N}$.

More generally, if $|\Delta u_0| + |\nabla u_1| \leq \alpha(-\Delta Q_N)$ with $\alpha \leq 1$, then $|u| \leq \alpha Q_n$, so the size of the solution depends linearly on the norm of the initial data.

Note that $C_N<1$ and $C_N \to 1$ as $N \to \infty$.


\begin{observation}
We actually only need a supersolution to equation (\ref{w}):
\be\lb{ineq}
-\Delta u_0 \geq u_0^{N+1},~u_0 \geq 0.
\ee
Then the solution $u$ with initial data $(u_0, 0)$ exists globally on $\R^{3+1}$, with $0 \leq u(x, t) \leq u_0(x)$, and so does any solution $v$ dominated by $u$, i.e.~with initial data $(v_0, v_1)$ such that $|\Delta v_0|+|\nabla v_1| \leq -\Delta u_0$.
%

Exactly the same criterion applies in the radial case, i.e.~if
$$
-\Delta u_0 = -(u_0)_{rr}-\frac 2 r (u_0)_r \geq u_0^{N+1}
$$
or equivalently $-(r u_0)_{rr} \geq r u_0^{N+1}$, then the solution $u$ with radial initial data $(u_0, 0)$ exists globally, with $0 \leq u(r, t) \leq u_0(r)$, and so does any radial solution $v$ dominated by $u$, i.e.~if $|(v_0)_r + v_0/r| + |v_1| \leq (u_0)_r + u_0/r$.

Much is known about the inequality (\ref{ineq}). For example, it has no positive solutions for $N \leq 2$, see \cite{bere}.

More relevantly, $Q_{N, \alpha} := C_N (r+\alpha)^{-2/N}$ is a supersolution for any $\alpha \geq 0$; check:
$$\begin{aligned}
-(r Q_{N, \alpha})_{rr} &= C_N(\frac 2 N(1-\frac 2 N)(\alpha+r)^{-1-2/N} + \alpha \frac 2 N)(1+\frac 2 N)(\alpha+r)^{-2-2/N}) \\
&\geq C_N^{N+1} r(\alpha+r)^{-2-2/N} = r Q_{N, \alpha}^{N+1}.
\end{aligned}
$$
Note that $Q_{N, \alpha}(x) = \inf_{|y|=\alpha} Q_N(x-y)$ is bounded for $\alpha>0$. Other supersolutions may also play a part in the study of this equation. This will be investigated in future papers.
\end{observation}

Even though the previous result did not deal with the asymptotic behavior of solutions, under slightly stronger conditions we can prove that solutions do not blow up in finite time and in several cases actually scatter. Some results only hold in the energy-supercritical range $N>4$. Also, for some estimates it is convenient to assume that $N$ is even.

The key part is having a uniform bound on the solutions. Thus, these results also apply to the solutions described in Corollary \ref{critic}. 

\begin{theorem}[Main result]\lb{thm_supercritic} Consider $\alpha>0$, $N>2$, and Borel measurable initial data $(u_0, u_1)$ to the equation (\ref{weakpm})
$$
u(t) = \cos(t\sqrt{-\Delta}) u_0 + \frac {\sin(t\sqrt{-\Delta})}{\sqrt{-\Delta}} u_1 \pm \int_0^t \frac {\sin((t-s)\sqrt{-\Delta})}{\sqrt{-\Delta}} |u(s)|^N u(s) \dd s.
$$
Assuming that
\be\lb{conditie}
|\Delta u_0(x)|+|\nabla u_1(x)| \leq Q_{N, \alpha}^{N+1} = \frac {C_N^{N+1}}{(\alpha+|x|)^{2+2/N}} = \bigg(\frac {2(N-2)}{N^2}\bigg)^{1 + \frac 1 N} \frac 1 {(\alpha+|x|)^{2+2/N}},
\ee
then the corresponding solution $u$ exists globally on $\R^{3+1}$ and $\|u\|_{L^\infty_{t, x}} \leq C_N \alpha^{-2/N}$; in fact $|u(x, t)| \leq Q_{N, \alpha} = C_N (\alpha+|x|)^{-2/N}$.

Moreover, $\|(u(t), u_t(t))\|_{\dot B^{s_c}_{2, \infty} \times \dot B^{s_c-1}_{2, \infty}} \les_\alpha \langle t \rangle^2$. More generally, if they are finite, Sobolev norms of $u$ grow at most polynomially. Finally, if $(u_0, u_1)$ have finite energy, then energy remains constant for all time.

Assuming that e.g.~$(u_0, u_1) \in \dot H^{s_c} \times \dot H^{s_c-1}$ (where $s_c=3/2-2/N$ is the critical Sobolev exponent), then the appropriate Strichartz norms of $u$ do not blow up in finite time.

In the defocusing case
, assuming that $N > 4$ and $(u_0, u_1) \in (\dot H^{s_c} \cap \dot H^1) \times (\dot H^{s_c-1} \cap L^2)$, or in the focusing case
, assuming in addition that $(u_0, u_1)$ are radially symmetric, then $u$ disperses ($\|u\|_{L^{2N}_{t, x}} < \infty$) and scatters.
\end{theorem}
One can estimate all the norms explicitly, except in the radially symmetric focusing case.

In the radial case, similar conclusions hold under the weaker hypothesis
$$
|(u_0)_r + u_0/r| + |u_1| \leq (Q_{N, \alpha})_r + Q_{N, \alpha}/r = (1-2/N) C_N (\alpha+|x|)^{-1-2/N}.
$$
We omit their statement and proof.


For the energy-critical equation (\ref{classical_pm}) with $N=4$, the defocusing case is already well understood: all finite energy solutions disperse and scatter. In the focusing case, assuming the initial data are radially symmetric, \cite{dkm2} proved the Soliton Resolution Conjecture, a trichotomy between blow-up, solitons, and scattering. Then, in this case, the solutions we constructed disperse and scatter: since the singular soliton $Q_4$ does not dominate the finite energy solitons $Q$, the latter cannot appear in the decomposition, which also precludes type II blow-up.

Condition (\ref{conditie}) is not scaling-invariant, but rescaling a solution only changes the value of $\alpha>0$. On the other hand, condition (\ref{conditie}) with $\alpha>0$ is optimal, since the singular soliton $Q_N$, which fulfills this condition with $\alpha=0$, provides a counterexample: its $L^{2N}_{t, x}$ norm is infinite on any time interval.

The $\dot H^{s_c} \times \dot H^{s_c-1}$ norm of the initial data can be arbitrarily high, though the $\dot B^{s_c}_{2, \infty}$ Besov norm is bounded by a constant of size $1$. The solutions constructed by Theorem \ref{thm_supercritic} are the first examples of large global solutions to a supercritical equation which correspond to generic initial data (from an open set), are bounded in the critical Sobolev norm, and scatter --- i.e.~the first ``normal" large solutions.

Finally, we return to the case $\alpha=0$, where approximating by smooth solutions yields some weaker conclusions.
\begin{corollary}\lb{weak_solutions} Assume that condition (\ref{conditie}) holds with $\alpha=0$ for $(u_0, u_1)$, that the equation is defocusing, and that $E[u](0) < \infty$. Then $E[u](t) \leq E[u](0)$ for all $t$. If the initial data only have locally finite energy, then so does the solution.
\end{corollary}
\begin{proof}[Proof of Corollary \ref{weak_solutions}]
Dropping the assumption that $\alpha>0$, take any admissible pair of initial data $(u_0, u_1)$ for $\alpha = 0$ and let $\Delta u_{n0} \to \Delta u_0$ and $\nabla u_{n1} \to \nabla u_1$ pointwise, where each $(u_{n0}, u_{n1})$ are smooth and satisfy condition (\ref{conditie}) for some $\alpha_n > 0$.
	
Due to dominated convergence (also see Proposition \ref{global_existence}), it follows that $(u_{n0}, u_{n1}) \to (u_0, u_1)$ in $\dot B^{s_c}_{2, \infty} \times \dot B^{s_c-1}_{2, \infty}$ and $u_n \to u$ in $L^{p_c, \infty}_x$ pointwise in $t$. By weak convergence, the latter implies that $E[u](t) \leq \limsup_n E[u_n](t)$ in the defocusing case.
	
However, all these approximations have constant energy, converging to $E[u](0)$, so $E[u](t)$ will be bounded.
\end{proof}



The paper is organized as follows: in the introduction we state the main results, in Section \ref{section_notations} we define some notations, in Section \ref{section_positivity} we state and prove the positivity criteria we subsequently use, and in Sections \ref{section_quadratic} and \ref{section_focusing} we prove the results pertaining to equation (\ref{eq_fritz}), respectively (\ref{eq_sup}).

\section{Notations}\lb{section_notations}

$A \les B$ means that $|A| \leq C |B|$ for some constant $C$. We denote various constants, not always the same, by $C$.

The Laplacian is the operator on $\R^3$ $\Delta=\frac {\partial^2}{\partial_{x_1}^2} + \frac {\partial^2}{\partial_{x_2}^2} + \frac {\partial^2}{\partial_{x_3}^2}$.

We denote by $L^p$ the Lebesgue spaces, by $\dot H^s$ homogenous and by $H^s$ inhomogenous Sobolev spaces, and by $L^{p, q}$ Lorentz spaces.

$\dot H^s$ and $H^s$ are Hilbert spaces and so is $\dot H^1 \times L^2$, under the norm
$$
\|(u_0, u_1)\|_{\dot H^1 \times L^2}=(\|u_0\|_{\dot H^1}^2+\|u_1\|_{L^2}^2)^{1/2}.
$$

For a radially symmetric function $u(x)$, we let $u(r):=u(x)$ for $|x|=r$.

We define the mixed-norm Strichartz spaces on $\R^3 \times [0, \infty)$
$$
L^p_t L^q_x := \Big\{f \mid \|f\|_{L^p_t L^q_x}:= \Big(\int_0^\infty \|f(x, t)\|_{L^q_x}^p \dd t\Big)^{1/p} < \infty \Big\},
$$
with the standard modification for $p=\infty$, and likewise for the reversed mixed-norm spaces $L^q_x L^p_t$. We use a similar definition for $L^p_t \dot W^{s, p}_x$. Also, for $I \subset [0, \infty)$, let $\|f\|_{L^p_t L^q_x(\R^3 \times I)} := \|\chi_I(t) f\|_{L^p_t L^q_x}$, where $\chi_I$ is the characteristic function of $I$.

The global Kato space is defined as follows:
$$
\mc K = \{u \in L^1_{loc} : \|u\|_{\mc K}:= \sup_y \int_{\R^3} \frac {|u(x)|}{|x-y|} < \infty\}.
$$

\section{Positivity criteria for solutions to the wave equation}\lb{section_positivity}

The positivity of the fundamental solution to the free wave equation in dimensions one to three has been known for a long time. In the context of semilinear wave equations, it was used by \cite{john} to prove finite time blow-up below the Strauss exponent for small initial data.

We use positivity in a different manner, by deriving sufficient --- and in some cases also necessary --- positivity criteria for solutions to the wave equation (inspired by the techniques used in \cite{becsof}). These are the basis of our main results.

The criteria can be stated either a priori, i.e. for classical solutions, or for more general rough solutions, which will be only a.e.~positive. Here we shall state them a priori, while also pointing out the necessary modifications for rough solutions.

Although not needed in this paper, we first look at the simplest case, that of the one-dimensional equation.

\begin{lemma}\lb{1d} Consider smooth $(u_0, u_1)$ and suppose that there exists an antiderivative $\partial_x^{-1} u_1$ of $u_1$ such that $u_0 \geq |\partial_x^{-1} u_1|$. Then the corresponding solution $u$ of the one-dimensional free wave equation
$$
u_{tt}-u_{xx}=0,~u(0)=u_0,~u_t(0)=u_1
$$
is nonnegative on $\R^{1+1}$.

Moreover, assume that $u_0$ and $u_1$ are just Borel measurable and $u_0 \geq |\partial_x^{-1} u_1|$ a.e.. Then $u$ is nonnegative, possibly except for a measure zero set of light rays.
\end{lemma}
Note that, in order to apply this to compact support solutions, it is necessary that $\int_\R u_1 \dd x = 0$.

When $u_0$ and $\partial_x^{-1} u_1$ have compact support, the condition that $u_0 \geq |\partial_x^{-1} u_1|$ is also necessary for positivity.

\begin{proof}[Proof of Lemma \ref{1d}] By the well-known d'Alembert formula, $u(x, t) = u_+(r-t)+u_+(r+t)$, where
$$
u_\pm(x) = \frac 1 2 (u_0(x) \mp \partial_x^{-1} u_1(x))
$$
and $\partial_x^{-1} u_1$ denotes any antiderivative of $u_1$. If $u_\pm$ are nonnegative, then so is $u$. Hence it suffices that
$u_0(x) \mp \partial_x^{-1} u_1(x) \geq 0$, which is our hypothesis.
\end{proof}

Skipping the two-dimensional case for now, we next state a positivity criterion for radially symmetric solutions to the free wave equation on $\R^{3+1}$.

The proof is based on the following construction introduced in \cite{becsof}: for radial $u(r)=u(|x|)$, define
\be\lb{tu}
T(u)(r) = (ru(r))',~u(r)= \frac 1 r \int_0^r T(u)(s) \dd s.
\ee
Then, $u$ solves the free wave equation on $\R^{3+1}$ if and only if $U=T(u)$ solves the free wave equation on $[0, \infty) \times \R$
\be\lb{1D}
U_{tt}-U_{rr}=0,~U(0)=U_0=T(u_0),~U_t(0)=U_1=T(u_1),
\ee
with Neumann boundary conditions $U_r(0, t) = 0$. Equation (\ref{1D}) has solutions of the form (for $r \geq 0$)
$$
U(r, t)=\chi_{r \geq t} U_+(r-t) + \chi_{r \leq t} U_-(t-r) + \chi_{r+t \geq 0} U_-(r+t) + \chi_{r+t \leq 0} U_+(-r-t),
$$
where by d'Alembert's formula
\be\lb{dalembert}
U_\pm(r)=\frac 1 2(U_0(r)\mp\partial_r^{-1} U_1(r)).
\ee

If we take $(U(t_0), U_t(t_0))$ as initial data in equation (\ref{1D}), we obtain a time-translated solution $\tilde U$ with
$$
\tilde U_\pm(r) = \frac 1 2 (U(r, t_0) \mp \partial_r^{-1} U_t(r, t_0)).
$$
This decomposition is related to the original one by
\be\lb{dalembert'}
\tilde U_-(r)=\chi_{r \geq 0}U_-(r+t_0),~\tilde U_+(r)=\chi_{r \geq t_0} U_+(r-t_0) + \chi_{0 \leq r \leq t_0} U_-(t_0-r)
\ee
for $t_0 \geq 0$ and similar for $t_0 \leq 0$.

In \cite{becsof}, we also used this construction to define incoming and outgoing radial initial data.
\begin{definition}\lb{def_outgoing} We call a radially symmetric pair $(u_0, u_1)$ \emph{outgoing} if
$$
(u_0)_r + \frac {u_0} r = u_1.
$$
\end{definition}
In this definition, the only way for both $u_0$ and $u_1$ to be smooth is if $u_0(0) = 0$, but this condition is not necessary for rougher, discontinuous solutions.

Given outgoing initial data, we showed that for $r \geq t \geq 0$
\be\lb{positive_outgoing}
u(r, t) = \frac {r-t} r u_0(r-t)
\ee
and $u(r, t) \equiv 0$ for $0 \leq r \leq t$. It is easy to see that, for outgoing initial data, if $u_0 \geq 0$ then $u \geq 0$. More generally, if $u_0 \geq 0$ a.e., then $u \geq 0$ everywhere, possibly except for a measure zero set of forward light cones.

The three-dimensional positivity criterion involves one more derivative than the one-dimensional one.

\begin{lemma}\lb{positive} Suppose that $(u_0, u_1)$ are smooth and radial. Then $(r u_0(r))_r > r |u_1|$ if and only if the solution $u$ to the free wave equation
$$
u_{tt}-\Delta u =0,~u(0)=u_0,~u_t(0)=u_1
$$
is positive on $\R^{3+1}$. Moreover, in this case for any $t \in \R$ $(r u(r, t))_r > r |u_t(t, r)|$.

Conversely, if there exists $r_0 \geq 0$ such that $(r u_0)_r(r_0) \leq r_0 |u_1(r_0)|$, then $u \leq 0$ somewhere in $\R^{3+1}$ (more precisely, $u(0, t=r_0) \leq 0$ or $u(0, t=-r_0) \leq 0$).

In addition, $u \in L^\infty_{t, x}$ if and only if $(r u_0(r))_r,~r u_1 \in L^\infty$ and in this case
$$
\|(r u(r, t))_r\|_{L^\infty_{t, r}} + \|r u_t(t, r)\|_{L^\infty_{t, r}} \leq 2 \|u\|_{L^\infty_{t, x}} \leq 2(\|(r u_0(r))_r\|_{L^\infty} + \|r u_1\|_{L^\infty}).
$$
\end{lemma}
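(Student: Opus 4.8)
The plan is to transfer everything to the one-dimensional picture supplied by the transformation (\ref{tu}). Writing $V := ru$ and $U := T(u) = (ru)_r = V_r$, one checks that $V$ solves the half-line equation $V_{tt} = V_{rr}$ with the Dirichlet condition $V(0,t) = 0$, equivalently $U$ solves (\ref{1D}) with the Neumann condition $U_r(0,t)=0$. Introducing the d'Alembert profiles $U_\pm$ of (\ref{dalembert}) and their antiderivatives $V_\pm$ normalized by $V_\pm' = U_\pm$, $V_\pm(0) = 0$, the odd extension forced by the Dirichlet condition yields the reflection identity $V_+(-r) = -V_-(r)$ and the representation $V(r,t) = V_+(r-t) + V_-(r+t)$, so that $u(r,t) = \bigl(V_+(r-t)+V_-(r+t)\bigr)/r$. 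The first observation is that since $\partial_r^{-1}U_1 = ru_1$, one has $U_\pm = \frac12\bigl((ru_0)_r \mp ru_1\bigr)$, so the hypothesis $(ru_0)_r > r|u_1|$ is exactly the pair $U_\pm > 0$ on $[0,\infty)$; equivalently, each $V_\pm$ is strictly increasing there and vanishes at the origin.

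To prove positivity I would split into $|t| \leq r$ and $|t| > r$. When $|t|\leq r$ both arguments $r\pm t$ are nonnegative, so $V_+(r-t), V_-(r+t) \geq 0$ and $u>0$ for $r>0$. The essential case is $|t|>r$, where one argument is negative and the two profiles have opposite signs; here I would use the reflection identity to rewrite the negative term. For $t>r$ one has $-V_+(r-t) = V_-(t-r)$, reducing the claim to $V_-(r+t) > V_-(t-r)$, which holds since $r+t > t-r$ and $V_-$ is increasing (the case $t<-r$ is symmetric, using $V_+$). This is where the three-dimensional reflection at the origin truly enters, and I expect it to be the main obstacle: one must see that the monotonicity of the d'Alembert profiles survives the reflection and controls the sign in the post-focusing region $|t|>r$.

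The \emph{moreover} statement $(ru(t))_r > r|u_t(t)|$ follows by taking $(u(t_0),u_t(t_0))$ as new data and reading off from (\ref{dalembert'}) that the time-$t_0$ profiles $\tilde U_\pm$ are sums of the values of $U_\pm$ at nonnegative arguments, hence strictly positive; this is precisely the translated hypothesis. For the converse I would compute the solution on the axis: since $V(0,t)=0$, $u(0,t) = V_r(0,t) = U(0,t)$, and the Neumann formula evaluates to $u(0,t) = 2U_-(t)$ for $t\geq 0$ and $u(0,t) = 2U_+(-t)$ for $t\leq 0$ (using $U_+(-r) = U_-(r)$). If $(ru_0)_r(r_0) \leq r_0|u_1(r_0)|$ then at least one of $U_\pm(r_0)$ is $\leq 0$, whence $u(0,r_0)\leq 0$ or $u(0,-r_0)\leq 0$; this contrapositive upgrades the sufficient condition to a necessary one.

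Finally, for the $L^\infty$ equivalence I would prove the two inequalities separately. The upper bound $\|u\|_{L^\infty} \leq \|(ru_0)_r\|_{L^\infty}+\|ru_1\|_{L^\infty}$ comes from the averaging identity $u(r,t) = \frac1r\int_0^r U(s,t)\dd s$ together with the d'Alembert estimate $\|U\|_{L^\infty_{t,r}} \leq \|(ru_0)_r\|_{L^\infty}+\|ru_1\|_{L^\infty}$, the latter bounding the even-extended profile and the odd-extended $ru_1$ term by the sup of the data. The lower bound $\|(ru)_r\|_{L^\infty}+\|ru_t\|_{L^\infty} \leq 2\|u\|_{L^\infty}$ is obtained by running the axis computation in reverse: at any base time $t_1$ the profiles satisfy $\tilde U_-(r) = \frac12 u(0,t_1+r)$ and $\tilde U_+(r) = \frac12 u(0,t_1-r)$, so $|\tilde U_\pm| \leq \frac12\|u\|_{L^\infty}$, and since $(ru)_r = \tilde U_+ + \tilde U_-$ and $ru_t = \tilde U_- - \tilde U_+$ at time $t_1$, both are bounded pointwise by $\|u\|_{L^\infty}$. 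Evaluating this chain at $t=0$ gives the equivalence $u\in L^\infty_{t,x}$ iff $(ru_0)_r,\ ru_1 \in L^\infty$.
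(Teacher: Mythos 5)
Your proof is correct and follows essentially the same route as the paper: the transformation $U=(ru)_r$ of (\ref{tu}), the d'Alembert profiles $U_\pm=\frac12((ru_0)_r\mp ru_1)$, the reflection at the origin, and the axis identity $u(0,t)=U(0,t)$ for the converse and the lower $L^\infty$ bound. The only cosmetic difference is that you verify positivity at the level of the integrated profiles $V_\pm$ via their monotonicity, whereas the paper checks $U>0$ pointwise from the reflected solution formula and then uses $u(r,t)=\frac1r\int_0^r U(s,t)\,ds$ --- the two steps are equivalent.
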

Similar conclusions hold everywhere, possibly except at $r=0$, if we just assume $u_0$, $u_1$, and $(u_0)_r$ are Borel measurable instead of smooth and the inequalities only hold a.e.~at $t=0$.

Our positivity condition implies that $(ru_0(r))' > 0$. Note that $u_0$ will decay no faster than $1/r$ and (of course) will be positive. In particular, this means that $u_0$ can have finite energy, but cannot have finite $L^2$ norm.

\begin{observation} If we allow equality as well, then $(r u_0(r))_r \geq r |u_1|$ is equivalent to $u \geq 0$ on $\R^{3+1}$ (and to $(r u(r, t))_r \geq r |u_t(t, r)|$ for any $t \in \R$).
\end{observation}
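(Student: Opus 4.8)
The plan is to establish the non-strict equivalence by the same reduction to the one-dimensional Neumann problem used for Lemma \ref{positive}, replacing every strict inequality by its non-strict counterpart and supplying the two boundary (limiting) arguments needed to close the equivalence in both directions. Recall from (\ref{tu})--(\ref{dalembert}) that $U=T(u)$ solves the one-dimensional problem (\ref{1D}) with profiles $U_\pm$ given by (\ref{dalembert}). Choosing the antiderivative so that $\partial_r^{-1}U_1(r)=ru_1(r)$ (which vanishes at $r=0$), and using $U_0=(ru_0)_r$, one has the algebraic identity
\be
U_\pm(r)=\tfrac12\big((ru_0(r))_r \mp ru_1(r)\big),\qquad r\ge 0,
\ee
so that the hypothesis $(ru_0)_r\ge r|u_1|$ is \emph{exactly} the pair of conditions $U_+\ge 0$ and $U_-\ge 0$ on $[0,\infty)$.

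For the forward implication I would argue directly from the explicit representation of $U$ displayed just before (\ref{dalembert}): since that formula writes $U(r,t)$ as a sum of four $\chi$-weighted translates of $U_+$ and $U_-$, the conditions $U_\pm\ge 0$ force $U(r,t)\ge 0$ for every $(r,t)$, whence $u(r,t)=\frac1r\int_0^r U(s,t)\dd s\ge 0$ by the reconstruction in (\ref{tu}). Equivalently, and more softly, one may add a constant $\epsilon>0$ to $u_0$: constants solve the free wave equation, so the solution with data $(u_0+\epsilon,u_1)$ is exactly $u+\epsilon$, whose data satisfies the strict hypothesis of Lemma \ref{positive}; hence $u+\epsilon>0$, and letting $\epsilon\downarrow 0$ gives $u\ge 0$.

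For the converse I would evaluate the solution at the spatial origin. Passing to the limit $r\to 0$ in $ru(r,t)=\int_0^r U(s,t)\dd s$ gives $u(0,t)=U(0,t)$, and the representation formula at $r=0$ yields, for $r_0>0$,
\be
u(0,r_0)=2U_-(r_0)=(ru_0)_r(r_0)+r_0u_1(r_0),\qquad u(0,-r_0)=2U_+(r_0)=(ru_0)_r(r_0)-r_0u_1(r_0),
\ee
so that $\min\{u(0,r_0),u(0,-r_0)\}=(ru_0)_r(r_0)-r_0|u_1(r_0)|$. If $u\ge 0$ on $\R^{3+1}$, then the left side is $\ge 0$ for every $r_0$, which is precisely $(ru_0)_r\ge r|u_1|$; this is the non-strict (and quantitative) form of the ``conversely'' clause of Lemma \ref{positive}. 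Finally, the pointwise-in-time statement $(ru(r,t))_r\ge r|u_t(r,t)|$ follows by applying the equivalence just proved to the Cauchy data $(u(\cdot,t_0),u_t(\cdot,t_0))$ at an arbitrary time $t_0$: by (\ref{dalembert'}) the transform and d'Alembert decomposition apply verbatim at time $t_0$, and the resulting solution is $u$ shifted in time, hence still nonnegative, so the spatial inequality holds at $t_0$.

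\textbf{Main obstacle.} The genuinely delicate point is the behavior at the spatial origin $r=0$: one must verify that $u(0,t)=U(0,t)$ (that no boundary layer arises from the Neumann reflection) and keep the factor-$2$ bookkeeping in the representation formula straight, since at $r=0$ two of the four $\chi$-regions coincide. Everything else is the verbatim $\ge$ version of the strict proof; the only real content beyond Lemma \ref{positive} is supplying the limiting and boundary arguments that upgrade the strict implications into non-strict equivalences in both directions.
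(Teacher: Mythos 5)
Your proposal is correct and follows essentially the same route as the paper: the remark is the non-strict analogue of Lemma \ref{positive}, whose proof via the transform $T$ of (\ref{tu}), the d'Alembert profiles $U_\pm$ of (\ref{dalembert}), the identity $u(0,t)=U(0,t)$ for the converse, and (\ref{dalembert'}) for the statement at arbitrary times, carries over verbatim with $\geq$ in place of $>$. Your two supplementary devices --- the $\epsilon$-shift by a constant solution and the explicit evaluation $u(0,\pm r_0)=2U_\mp(r_0)$ --- are exactly the right (and correct) bookkeeping to close the equivalence in the non-strict setting.
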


\begin{proof}[Proof of Lemma \ref{positive}] With $T$ defined by (\ref{tu}), let $T(u)=U$. Then, by virtue of (\ref{tu}), in order to prove that $u > 0$ it suffices to prove that $U > 0$ and in turn this follows once we show that $U_\pm > 0$. However, by (\ref{dalembert}),
\be\lb{vpm}
U_\pm(r) = \frac 1 2 (U_0(r) \mp \partial_r^{-1} U_1(r)) = \frac 1 2 \big((ru_0(r))_r \mp ru_1(r)\big) > 0.
\ee
By formula (\ref{dalembert'}), this implies that $\tilde U_\pm > 0$, so, by a computation analogous to (\ref{vpm}), we get that $(r u(r, t))_r \mp ru_t(r, t) >0$.

For the converse statement, if $(r u_0)_r(r_0) \leq r_0|u_1(r_0)|$, then either $U_+(r_0) \leq 0$ or $U_-(r_0) \leq 0$. Both cases imply that $U(0, t) \leq 0$ for some $t$ ($t=-r_0$ in the first case, $t=r_0$ in the second case). But by (\ref{tu}) $u(0, t)=U(0, t)$, so $u$ is indeed nonpositive somewhere in $\R^{3+1}$.

The same reasoning applies to the $L^\infty_{t, x}$ norm: following (\ref{tu}) and (\ref{vpm}),
$$
\|u\|_{L^\infty_{t, x}} \leq \|U\|_{L^\infty_{t, r}} \leq \|U_-\|_{L^\infty} + \|U_+\|_{L^\infty} \leq \|(r u_0(r))_r\|_{L^\infty} + \|r u_1\|_{L^\infty}.
$$

Conversely, again following (\ref{vpm}),
$$
\|(r u_0(r))_r\|_{L^\infty} + \|r u_1\|_{L^\infty} \leq 2\max(\|U_-\|_{L^\infty}, \|U_+\|_{L^\infty}) \leq 2 \|U(0, t)\|_{L^\infty_t} \leq 2 \|u\|_{L^\infty_{t, x}}.
$$
(Again we used the fact that $u(0, t)=U(0, t)$). Same is true for any other time $t \in \R$, so
$$
\|(r u(r, t))_r\|_{L^\infty_{t, r}} + \|r u_t(t, r)\|_{L^\infty_{t, r}} \leq 2 \|u\|_{L^\infty_{t, x}}.
$$
\end{proof}

In the proof we also use the following more refined condition for the boundedness of solutions.
\begin{corollary}\lb{boundedness} Suppose $u$ is a smooth radial solution to the free wave equation on $\R^{3+1}$:
$$
u_{tt} - \Delta u=0,~u(0)=u_0,~u_t(0)=u_1.
$$
Then $a \leq u \leq b$ on $\R^{3+1}$ if and only if
$$
(r u_0(r))_r - a \geq r|u_1|,~b - (r u_0(r))_r \geq r|u_1|,
$$
in which case it is also true that for any $t \in \R$
$$
(r u(r, t))_r - a \geq |u_t(r, t)|,~b - (r u(r, t))_r \geq |u_t(r, t)|.
$$
\end{corollary}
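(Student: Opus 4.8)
The plan is to reduce Corollary \ref{boundedness} to the positivity criterion already established in Lemma \ref{positive}, applied not to $u$ itself but to two shifted copies of it. The key observation is that every constant solves the free wave equation, so if $u$ is a solution then so are $w := u - a$ and $v := b - u$, with initial data $(u_0 - a, u_1)$ and $(b - u_0, -u_1)$ respectively. Since the two-sided bound $a \leq u \leq b$ on $\R^{3+1}$ is exactly the requirement that $w \geq 0$ and $v \geq 0$ hold simultaneously, it suffices to characterize the nonnegativity of each via the equality version of Lemma \ref{positive} recorded in the Remark immediately following it.

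The next step is to track how the transformation $T$ from (\ref{tu}) acts on the shifted data. The point is that $T$ behaves affinely under adding a constant: since $(ra)_r = a$, one gets $T(w_0)(r) = (r(u_0 - a))_r = (ru_0)_r - a$ and likewise $T(v_0)(r) = b - (ru_0)_r$. The velocities transform to $w_1 = u_1$ and $v_1 = -u_1$, so $|w_1| = |v_1| = |u_1|$ and hence $r|w_1| = r|v_1| = r|u_1|$. Applying the equality criterion to $w$ gives $w \geq 0$ everywhere if and only if $(ru_0)_r - a \geq r|u_1|$, and applying it to $v$ gives $v \geq 0$ if and only if $b - (ru_0)_r \geq r|u_1|$. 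Conjoining these two equivalences yields precisely the stated characterization of $a \leq u \leq b$. For the propagated statement, I would read the ``moreover'' clause of Lemma \ref{positive}, namely that nonnegativity at $t=0$ forces $(rw(r,t))_r \geq r|w_t(r,t)|$ for every $t$, back through $w = u-a$ and $v = b-u$; using once more that $(ra)_r = a$ and that $w_t = u_t$, $v_t = -u_t$, this delivers $(ru(r,t))_r - a \geq r|u_t(r,t)|$ and $b - (ru(r,t))_r \geq r|u_t(r,t)|$ for all $t \in \R$.

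There is essentially no serious obstacle: the entire argument is a short reduction, once one notices that shifting by a constant is both a symmetry of the free wave equation and compatible with $T$ because $(ra)_r = a$. The only points demanding minor care are the sign bookkeeping in the auxiliary solution $v = b - u$, where the velocity flips sign (harmless, since only $|u_1|$ and $|u_t|$ enter the criterion), and the observation that the propagated inequalities inherit the factor $r$ in front of $|u_t(r,t)|$ that appears in Lemma \ref{positive}, so they are most naturally stated with $r|u_t(r,t)|$.
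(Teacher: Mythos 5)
Your proposal is correct and is exactly the paper's argument: the paper's entire proof reads ``Apply Lemma \ref{positive} to $u-a$ and to $b-u$,'' and you have simply written out the details of that reduction, including the affine behavior of $T$ under constant shifts. Your closing remark about the factor $r$ in front of $|u_t(r,t)|$ in the propagated inequalities is also well taken --- the corollary's displayed conclusion omits it, and the consistent form is indeed $r|u_t(r,t)|$.
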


These two conditions taken together imply that $a \leq (r u(r, t))_r \leq b$ and that $|u_t(r, t)| \leq \frac {b-a} {2r}$.

Corollary \ref{boundedness} is natural, in the sense that its hypotheses are true for all radially symmetric Schwartz class $(u_0, u_1)$ and some finite $a$, $b \in \R$. However, it is not necessary to assume any decay at infinity.

\begin{proof}[Proof of Corollary \ref{boundedness}] Apply Lemma \ref{positive} to $u-a$ and to $b-u$.
\end{proof}

We next state two positivity and boundedness criteria that hold for nonradial solutions. These criteria are less sharp than Lemma \ref{positive}, in the sense that they are sufficient, but not necessary. For one criterion we need to assume that the initial data $(u_0, u_1)$ decay at infinity together with their derivatives. In addition, this criterion requires two derivatives instead of one.

In the statement of the boundedness criterion we also use the global Kato space, defined as follows:
$$
\mc K = \{u : \|u\|_{\mc K}:= \sup_y \int_{\R^3} \frac {|u(x)|}{|x-y|} < \infty\}.
$$

Due to the pairing between $L^{3/2, 1}$ and $L^{3, \infty}$ and to the fact that $\frac 1 {|x|} \in L^{3,\infty}$, it follows that $L^{3/2, 1} \subset \mc K$.

Also note that, due to the boundedness of the Riesz transforms, $\Delta u \in L^{3/2, 1}$ is equivalent to $D^2 u \in L^{3/2, 1}$.

\begin{lemma}\lb{positive_nonradial} Consider a smooth solution $u$ to the free wave equation on $\R^{3+1}$ with initial data $(u_0, u_1)$. If $u_0 > 0$ and $u_1 \geq |\nabla u_0|$, then $u > 0$ on $\R^3 \times [0, \infty)$.

Alternatively, assume that $(u_0, u_1)$ decay at infinity together with their derivatives. If $-\Delta u_0 > |\nabla u_1|$, then $u > 0$ on $\R^{3+1}$. Also, if $\Delta u_0, \nabla u_1 \in \mc K$, then $u \in L^\infty_{t, x}$ and
\be\lb{Linftynonradial}
\|u\|_{L^\infty_{t, x}} \leq \frac 1 {4\pi} (\|\Delta u_0\|_{\mc K} + \|\nabla u_1\|_{\mc K}).
\ee
\end{lemma}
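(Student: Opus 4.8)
\emph{First criterion (the easy one).} The plan is to apply Kirchhoff's formula directly. For $t \geq 0$,
$$
u(x, t) = \frac 1 {4\pi} \int_{S^2} \big[u_0(x + t\omega) + t\, \nabla u_0(x + t\omega) \cdot \omega + t\, u_1(x + t\omega)\big] \dd S(\omega).
$$
Since $|\omega| = 1$, Cauchy--Schwarz gives $\nabla u_0 \cdot \omega \geq -|\nabla u_0|$, so the bracket is bounded below by $u_0(x+t\omega) + t\,(u_1 - |\nabla u_0|)(x + t\omega)$. The hypotheses $u_0 > 0$ and $u_1 \geq |\nabla u_0|$ then make the integrand strictly positive for every $t \geq 0$, whence $u > 0$ on $\R^3 \times [0, \infty)$. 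The restriction to $t \geq 0$ is essential, since for $t < 0$ the coefficient of $u_1 - |\nabla u_0|$ reverses sign.

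\emph{Second criterion.} Here the plan is to rewrite the two half-solutions as convolutions against explicit, comparable kernels built from the Newtonian potential $\Phi(x) = \frac 1 {4\pi|x|}$. The decay of $(u_0, u_1)$ lets me write $u_0 = \Phi * (-\Delta u_0)$, and since $\cos(t\sqrt{-\Delta})$ commutes with convolution the first task is to compute $\cos(t\sqrt{-\Delta})\Phi$. Solving the free wave equation with data $(\Phi, 0)$ via Kirchhoff's formula and Newton's theorem (the spherical average of $1/|y|$ over $|y-x|=t$ equals $1/\max(|x|, t)$), I expect to obtain the Newtonian potential truncated to the exterior of the light cone,
$$
\cos(t\sqrt{-\Delta}) \Phi = \Phi \cdot \chi_{|x| > t}, \qquad \text{so} \qquad \cos(t\sqrt{-\Delta}) u_0 = \int_{|x-y|>t} \frac {-\Delta u_0(y)}{4\pi|x-y|} \dd y =: \int K_1\,(-\Delta u_0)\dd y,
$$
with $0 \leq K_1 \leq \frac 1 {4\pi|x-y|}$. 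For the velocity term I would integrate $u_1$ along outward rays from $x$ (legitimate because $u_1$ decays), converting the spherical mean in $\frac{\sin(t\sqrt{-\Delta})}{\sqrt{-\Delta}} u_1$ into
$$
\frac{\sin(t\sqrt{-\Delta})}{\sqrt{-\Delta}} u_1 = -\frac t {4\pi} \int_{|x-y|>t} \frac {(y-x)\cdot\nabla u_1(y)}{|x-y|^3} \dd y =: \int K_2 \cdot \nabla u_1 \dd y, \qquad |K_2| \leq \frac t {4\pi|x-y|^2}.
$$

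The crux is then a pointwise comparison on the common support $|x-y| > t$, where $\frac t {|x-y|} \leq 1$ forces $|K_2| \leq \frac t {4\pi|x-y|^2} \leq \frac 1 {4\pi|x-y|} = K_1$. Consequently, for $t \geq 0$,
$$
u(x, t) \geq \int_{|x-y|>t} \frac 1 {4\pi|x-y|}\Big[(-\Delta u_0)(y) - \tfrac t {|x-y|}|\nabla u_1(y)|\Big]\dd y \geq \int_{|x-y|>t} \frac {(-\Delta u_0 - |\nabla u_1|)(y)}{4\pi|x-y|}\dd y > 0
$$
by the hypothesis $-\Delta u_0 > |\nabla u_1|$; time reversal $u_1 \mapsto -u_1$, which preserves this hypothesis, extends positivity to $t < 0$, giving $u > 0$ on $\R^{3+1}$. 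The $L^\infty$ bound is then immediate from $0 \leq K_1 \leq \frac 1{4\pi|x-y|}$ and $|K_2| \leq \frac 1{4\pi|x-y|}$, namely
$$
|u(x, t)| \leq \frac 1 {4\pi}\int \frac {|\Delta u_0(y)| + |\nabla u_1(y)|}{|x-y|}\dd y \leq \frac 1 {4\pi}\big(\|\Delta u_0\|_{\mc K} + \|\nabla u_1\|_{\mc K}\big),
$$
which is (\ref{Linftynonradial}).

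The hard part will be establishing the two representation formulas with explicitly comparable kernels, and in particular the identity $\cos(t\sqrt{-\Delta})\Phi = \Phi\,\chi_{|x|>t}$ (together with justifying the ray integration for $u_1$ under mere decay). Once the domination $0 \leq |K_2| \leq K_1 \leq \frac 1{4\pi|x-y|}$ inside the light cone is secured, both the positivity and the Kato-norm estimate follow at once, as shown above.
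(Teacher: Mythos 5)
Your proposal is correct and follows essentially the same route as the paper: the first criterion is Kirchhoff's formula plus $\nabla u_0\cdot\omega\ge -|\nabla u_0|$, and the second rests on exactly the representation formulas (\ref{delta1})--(\ref{delta2}) (which you derive via $\cos(t\sqrt{-\Delta})\Phi=\Phi\,\chi_{|x|>t}$ and ray integration, both standard and valid under the stated decay) together with the same pointwise kernel comparison $t/|x-y|^2\le 1/|x-y|$ on $|x-y|\ge t$. The only difference is that you supply the derivation of the two formulas that the paper merely asserts.
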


The condition $-\Delta u_0 > |\nabla u_1| \geq 0$ automatically implies that $u_0 > 0$ (in fact, $u$ is superharmonic), since we can write
\be\lb{elliptic}
u_0(x) = \frac 1 {4\pi} \int_{\R^3} \frac{-\Delta u_0(y)}{|x-y|} \dd y.
\ee
For the same reason, if $-\Delta u_0 \in \mc K$, then $u_0 \in L^\infty$.

More generally, it suffices e.g.~that the first condition holds a.e.~on each two-dimensional sphere or that the second condition holds a.e..

It is easy to generate pairs of initial data that satisfy our hypotheses by, for example, first choosing $u_1$, then choosing $-\Delta u_0 > |\nabla u_1|$, and then retrieving $u_0$ by means of the formula (\ref{elliptic}).

If we assume instead that $u_0 \geq 0$ or that $-\Delta u_0 \geq |\nabla u_1|$ (i.e.~we allow for equality), then we get that $u \geq 0$.

A sharper way of bounding $u$ is to say that it is dominated by the positive solution with initial data $((-\Delta)^{-1}(|\Delta u_0|+|\nabla u_1|), 0)$. Also note that a solution $u$ with initial data $(u_0, 0)$ and $-\Delta u_0 \geq 0$ has $u_t \leq 0$ for $t \geq 0$ and $u_t \geq 0$ for $t \leq 0$ (so it reaches its maximum at time $0$).

By the method of descent (i.e.~writing a solution of the two-dimensional wave equation as a solution of the three-dimensional wave equation that is constant in one variable) the same criteria also hold in the two-dimensional case.

A sharper positivity criterion can be written in terms of the Radon transform of the initial data, but it is nonlocal and harder to use in the proof.

\begin{proof}[Proof of Lemma \ref{positive_nonradial}] This follows immediately from the usual (D'Alembert) solution formula for the free wave equation in three dimensions. With no loss of generality take $x=0$; then for $t > 0$
\be\lb{fund_solution}
u(0, t) = \frac 1 {4\pi t^2} \int_{|y|=t} u_0(y) \dd y + \frac 1 {4\pi t} \int_{|y|=t} (u_0)_r(y) + u_1(y) \dd y.
\ee
Note that $|(u_0)_r| \leq |\nabla u_0|$.

A similar formula ensures the validity of the second criterion: for $t \geq 0$
\be\lb{delta1}
u(0, t) = \frac 1 {4\pi} \int_{|y| \geq t} \frac {-\Delta u_0(y)}{|y|} \dd y + \frac 1 {4\pi t} \int_{|y|=t} u_1(y) \dd y
\ee
and
\be\lb{delta2}
\frac 1 {4\pi t} \int_{|y| = t} u_1(y) \dd y = - \frac t {4\pi} \int_t^\infty \int_{S^2} (u_1)_r(r\omega) \dd \omega \dd r = - \frac 1 {4\pi} \int_{|y| \geq t} \frac {t(u_1)_r(y)} {|y|^2} \dd y.
\ee
Then note that $|(u_1)_r| \leq |\nabla u_1|$ and $\frac t {|y|^2} \leq \frac 1 {|y|}$.

The boundedness estimate (\ref{Linftynonradial}) follows for example from formulas (\ref{delta1}) and (\ref{delta2}). Also note that an even more general statement was already proved in \cite{becgol}. 
\end{proof}

It is useful to have a criterion for the pointwise convergence of sequences of free wave equation solutions in this class. We distinguish two regimes, namely monotone convergence and dominated convergence.
\begin{lemma}\lb{pointwise} Consider a sequence of initial data $(u_{n0}, u_{n1})$ such that
$$
0 \leq -\Delta u_{n0} - |\nabla u_{n0}| \leq c(x) \in L^1_{loc}
$$
and
$$
-\Delta u_{n+1\,0} + \Delta u_{n0} \geq |\nabla u_{n+1\,1} - \nabla u_{n1}|
$$
for each $n$. Then $(u_{n0})$ converges monotonically and $(u_{n1})$ converges to some Borel measurable functions $(u_0, u_1)$ with $c(x) \geq -\Delta u_0 \geq \nabla u_1$ and the linear evolution $(u_n)_{lin}$ converges monotonically to $u_{lin}$.
	
Next, suppose that $u$ is a fixed positive a.~e.~finite solution with initial data $(u_0, u_1)$, $-\Delta u_0 - |\nabla u_0| \geq 0$. Let $v_n$ be a sequence of solutions dominated by $u$, i.e.
$$
-\Delta u_0 - |\nabla u_0| \geq |\Delta v_{n0}| + |\nabla v_{n1}|.
$$
If $\Delta v_{n0} \to \Delta v_0$, $\nabla v_{n1} \to \nabla v_1$ pointwise, then $v_n \to v$ pointwise, where $v$ is the solution with initial data $(v_0, v_1)$.
\end{lemma}
The condition that $c(x) \in L^1_{loc}$ is a technical one and not truly necessary. We omit the statements of other pointwise convergence conditions, for the other classes of positive solutions. The proof is based on a direct application of Lebesgue's monotone and dominated convergence theorems.

Finally, we also state a boundedness criterion that only holds locally in time, but requires fewer conditions.
\begin{lemma}\lb{local_bound} Consider the free wave equation on $\R^{3+1}$ with smooth initial data $u_0$ and $u_1$, such that $u_0,~\nabla u_0,~u_1 \in L^\infty$. Then the corresponding solution $u$ satisfies the bounds
$$\begin{aligned}
\sup_{\R^3 \times [-T, T]} u \leq \sup u_0 + T (\|u_1\|_{L^\infty} + \|\nabla u_0\|_{L^\infty}) \\
\inf_{\R^3 \times [-T, T]} u \geq \inf u_0 - T (\|u_1\|_{L^\infty} + \|\nabla u_0\|_{L^\infty}).
\end{aligned}$$
\end{lemma}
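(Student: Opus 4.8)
The plan is to read everything off the Kirchhoff solution formula already recorded as (\ref{fund_solution}). By translation invariance of the free wave equation, the value at a general point $(x, t)$ with $t > 0$ is
$$
u(x, t) = \frac 1 {4\pi t^2} \int_{|y-x|=t} u_0(y) \dd y + \frac 1 {4\pi t} \int_{|y-x|=t} \big((u_0)_r(y) + u_1(y)\big) \dd y,
$$
where $(u_0)_r$ denotes the radial derivative of $u_0$ about the center $x$, so that $|(u_0)_r| \leq |\nabla u_0|$ pointwise (as already noted in the proof of Lemma \ref{positive_nonradial}).

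The first term is exactly the spherical average of $u_0$ over $\{|y-x|=t\}$, since that sphere has surface area $4\pi t^2$; hence it lies between $\inf u_0$ and $\sup u_0$. For the second term I would estimate crudely: the integrand is bounded in absolute value by $\|\nabla u_0\|_{L^\infty} + \|u_1\|_{L^\infty}$, and integrating over a sphere of area $4\pi t^2$ and dividing by $4\pi t$ produces exactly the factor $t$. This yields, for every $x$ and every $t \in (0, T]$,
$$
u(x, t) \leq \sup u_0 + t\,(\|\nabla u_0\|_{L^\infty} + \|u_1\|_{L^\infty}) \leq \sup u_0 + T\,(\|\nabla u_0\|_{L^\infty} + \|u_1\|_{L^\infty}),
$$
together with the matching lower bound obtained by replacing $\sup u_0$ with $\inf u_0$. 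Taking the supremum (respectively infimum) over $x$ and $t$ then gives the two stated inequalities on $\R^3 \times [0, T]$.

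To cover negative times I would invoke time-reversibility: if $u$ solves the equation with data $(u_0, u_1)$, then $(x, t) \mapsto u(x, -t)$ solves it with data $(u_0, -u_1)$. Since the right-hand sides of both bounds depend on $u_1$ only through $\|u_1\|_{L^\infty} = \|{-}u_1\|_{L^\infty}$, the estimate for $t \in [-T, 0]$ is identical to that for $t \in [0, T]$, which completes the bounds over the full slab $\R^3 \times [-T, T]$.

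There is no serious obstacle here; the only point requiring a moment's care is that the radial-derivative term in (\ref{fund_solution}) is controlled by $|\nabla u_0|$ rather than needing any decay or integrability of the data. This is precisely what makes the statement purely local in time and dependent only on $L^\infty$ control of $u_0$, $\nabla u_0$, and $u_1$, in contrast to the global criteria of Lemma \ref{positive} and Lemma \ref{positive_nonradial}.
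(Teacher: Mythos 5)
Your argument is exactly the one the paper intends: its proof of Lemma \ref{local_bound} consists of the single sentence that the bounds follow immediately from (\ref{fund_solution}), and your write-up simply spells out that deduction (spherical average bounded by $\sup/\inf u_0$, the derivative terms contributing the factor $t \leq T$, and time reversal for $t<0$). The proposal is correct and takes the same approach as the paper.
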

\begin{proof}[Proof of Lemma \ref{local_bound}] This immediately follows from (\ref{fund_solution}).
\end{proof}

\section{Large solutions to equation \ref{eq_fritz}}\lb{section_quadratic}

\begin{proof}[Proof of Proposition \ref{fritz}] Consider a smooth solution $u$ of equation (\ref{eq_fritz}). By setting $v=e^{-u}$, we obtain the free wave equation on $\R^{3+1}$
$$
v_{tt}-\Delta v = 0,~v(0)=v_0=e^{-u_0},~v_t(0)=v_1=-e^{-u_0} u_1
$$
for $v$, where the initial data are also smooth.

The free wave equation always has a smooth solution $v$ for smooth initial data. Then, the transformation can be reversed as long as $v>0$, by setting $u=\ln v$. By Lemma \ref{positive}, $v$ is guaranteed to be positive as long as $(r v_0(r))_r > r|v_1|$. Expressing this in terms of $u$, we obtain exactly condition (\ref{cond}).

By the same lemma, in this case $(r v(r, t))_r > r |v_t(r, t)|$ or in other words $u_r(r, t) + |u_t(r, t)| < \frac 1 r$.

Next, $u \in L^\infty_{t, x}$ is equivalent to $v \in L^\infty_{t, x}$ and $\inf v > 0$. Applying Lemma \ref{boundedness}, we see that it is necessary and sufficient that
$$
\sup (r v_0(r))_r + r|v_1| < \infty,~\inf (r v_0(r))_r - r |v_1| > 0.
$$
But
$$
(r v_0(r))_r + r|v_1| = e^{-u_0} (1 -r (u_0)_r + r |u_1|) \leq e^{-\inf u_0} (1 + \|r (u_0)_r\|_{L^\infty} + \|r u_1\|_{L^\infty})
$$
and
$$
(r v_0(r))_r - r |v_1| = e^{-u_0} r \Big(\frac 1 r - (u_0)_r - |u_1|\Big) \geq e^{-\sup u_0} \epsilon.
$$
Thus $v$ is always between these two bounds: $A\leq v \leq B$, where $A=e^{-\sup u_0} \epsilon$ and $B=e^{-\inf u_0} (1 + \|r (u_0)_r\|_{L^\infty} + \|r u_1\|_{L^\infty})$. Taking the logarithm, we obtain exactly the inequality (\ref{comparison}).

In this situation, Lemma \ref{boundedness} also implies that
$$
A \leq (r v(r, t))_r - r |v_t(r, t)| \leq (r v(r, t))_r + r |v_t(r, t)| \leq B,
$$
hence
$$
2 r |v_t(r, t)| \leq B-A.
$$
Taking into account the fact that $A \leq v \leq B$, these statements imply
$$
u_r(r, t) + |u_t(r, t)| \leq \frac 1 r \Big(1-\frac A B\Big),~-u_r(r, t) + |u_t(r, t)| \leq \frac 1 r \Big(\frac B A -1\Big),
$$
respectively
$$
|u_t(r, t)| \leq \frac 1 {2r} \Big(\frac B A  - 1\Big).
$$
Thus we retrieve the conclusions (\ref{detaliat}).
\end{proof}

\begin{proof}[Proof of Proposition \ref{prop_dispersion}]
Concerning energy and dispersion, assuming that $u_0 \in L^\infty$ and that $(u_0)_r + |u_1| \leq \frac {1-\epsilon} r$, then we have already proved above that $v \geq e^{-\sup u_0} \epsilon$, i.e.~$u \leq \sup u_0 + \ln (1/\epsilon)$. Since $(u_0, u_1) \in \dot H^1 \times L^2$,
$$
\|(v_0-1, v_1)\|_{\dot H^1 \times L^2} \leq e^{-\inf u_0} \|(u_0, u_1)\|_{\dot H^1 \times L^2}.
$$
We need to subtract $1$ because $\lim_{x \to \infty} v_0 = e^0 = 1$. As a radial solution of the free wave equation, $v-1$ conserves energy,
$$
\|(v(t)-1, v_t(t)\|_{\dot H^1 \times L^2} = \|(v_0-1, v_1)\|_{\dot H^1 \times L^2},
$$
and also satisfies the endpoint Strichartz estimate \cite{klma}:
$$
\|v-1\|_{L^2_t L^\infty_x} \les \|(v_0 - 1, v_1)\|_{\dot H^1 \times L^2}.
$$
In order to convert back to $u$, note that
$$
\|(u(t), u_t(t))\|_{\dot H^1 \times L^2} \leq e^{\sup u} \|(v(t)-1, v_t(t))\|_{\dot H^1 \times L^2}
$$
and $|u| \leq |v-1| \max(1, e^{\sup u})$. Putting together all these estimates we obtain exactly (\ref{est}).
\end{proof}

\begin{proof}[Proof of Proposition \ref{blowup}] Let $v=e^{-u}$. Then, as stated above, $v$ satisfies the free wave equation
$$
v_{tt}-\Delta v = 0,~v(0)=v_0=e^{-u_0},~v_t(0)=v_1=-e^{-u_0} u_1.
$$
Our hypothesis $(u_0)_r(r_0)+|u_1(r_0)| \geq \frac 1 {r_0}$ implies that $(r v_0)_r(r_0) \leq r_0 |v_1(r_0)|$. Then, by Lemma \ref{positive}, it follows that $v(0, t=r_0) \leq 0$ or $v(0, t=-r_0) \leq 0$.

Suppose $v(0, t=r_0) \leq 0$ and let $t_0 = \inf \{t \geq 0 : \exists r,~|r| \leq r_0-t \text{ and } v(r, t)=0\}$. By continuity and compactness, $v(r_1, t_0)=0$ for some $r_1$. Then clearly $t_0 \leq r_0$ and $t_0>0$ (since at time $0$ $v_0=e^{u_0} \ne 0$). By our definition, on the light cone with $\{(r, t) : t \geq 0,~|r-r_1| \leq t_0 - t \}$ $v$ is positive.

This means that we can take $u=\ln v$ and retrieve a smooth solution $u$ of the original equation (\ref{eq_fritz}) on this cone. At the same time, since $\lim_{(r, t) \to (r_1, t_0)} v(r, t) = v(r_1, t_0) = 0$, it follows that $\lim_{(r, t) \to (r_1, t_0)} u(r, t) = -\infty$, which implies the $L^\infty_{loc}$ blow-up.

Consider $x_0 \in \R^3$ with $|x_0|=r_1$, so that $v(x_0, t_0)=0$. It follows that, for $|x-x_0|, |t-t_0| \leq 1$, $|v(x, t)| \les |x-x_0| + |t-t_0|$.

Therefore, under the same conditions and on the domain of $u$,
\be\lb{blowup_bound}
u(x, t) \leq C + \ln(|x-x_0| + |t-t_0|).
\ee
Setting $x=x_0$, we get that $\|u(t)\|_{L^\infty_x(|x-x_0| \leq 1)} \geq C + |\ln |t-t_0||$.

Concerning the $H^{3/2}$ norm, we use the Trudinger-Moser inequality, see (\cite{trudinger}) and (\cite{moser}): for any sufficiently regular bounded domain $\Omega \subset \R^3$ there exists $C_\Omega$ such that
$$
\int_\Omega \exp\bigg(\bigg(\frac {|u(x)|}{C_\Omega\|u\|_{H^{3/2}}}\bigg)^2\bigg) - 1 \dd x \leq 1.
$$
By making the coordinate change $x-x_0=|t-t_0|(y-x_0)$, we obtain that $\|u(t)\|_{H^{3/2}(|x-x_0| \leq 1)} \geq C |\ln|t-t_0||^{1/2}$.
\end{proof}

\begin{proof}[Proof of Proposition \ref{prop_fritz_general}]
Here we make the substitution $v=F(u)$, where $F''/F'=-f$, so we can take $F(u)=\int_0^u e^{-\int_0^s f(\sigma) \dd \sigma} \dd s$, see (\ref{F}). Then $v$ solves the free wave equation
$$
v_{tt}-\Delta v = 0,~v(0)=v_0=F(u_0),~v_t(0)=v_1=F'(u_0)u_1.
$$

If $F(\pm \infty)=\pm \infty$ we can always invert this transformation, so we obtain global smooth solutions for any smooth initial data. On the other hand, if $F(-\infty)=a \in \R$ and/or $F(+\infty)=b \in \R$, then, in order to invert by taking $u=F^{-1}(v)$, we need to impose the condition $v > a$ and/or $v<b$. By Corollary \ref{boundedness}, in the first case it suffices that
$$
r F'(u_0) (u_0)_r + F(u_0)-a > r F'(u_0) |u_1|,
$$
which is equivalent to condition (\ref{cond1}).

In order to obtain $L^\infty_{t, x}$ solutions, we must ask that $v \in L^\infty_{t, x}$ and, depending on each case, $\inf v > a$ and/or $\sup v < b$.

Suppose that $u_0 \in L^\infty$. By Lemma \ref{positive}, in order for $v \in L^\infty_{t, x}$ we must assume that $r (u_0)_r,~r u_1 \in L^\infty$.

By Corollary \ref{boundedness}, a necessary and sufficient condition for e.g.~$\inf v - a > 0$ is that
$$
\inf r F'(u_0) (u_0)_r + F(u_0) - a - r F'(u_0) |u_1| > 0.
$$
Rewriting this condition as
$$
F'(u_0)(r (u_0)_r - r|u_1|) \geq \epsilon_0 + a - F(u_0)
$$
and noting that
$$
F'(u_0)=e^{-\int_0^{u_0} f(\sigma) \dd \sigma},\ F(u_0)-a=\int_{-\infty}^{u_0} e^{-\int_0^s f(\sigma) \dd \sigma} \dd s,
$$
we obtain
$$
-(u_0)_r + |u_1| \leq \frac 1 r \bigg( \int_{-\infty}^{u_0} e^{\int_s^{u_0} f(\sigma) \dd \sigma} \dd s - \frac {\epsilon_0}{F'(u_0)} \bigg).
$$
Taking into account the fact that we are assuming $u_0 \in L^\infty$, so $F'(u_0)$ is bounded from above and below, this reduces to the stated condition (\ref{Linfty}).

Finally, $\inf v-a>0$ equally implies (\ref{Linfty'}) uniformly for all times $t$, but with a different $\epsilon$ depending on the whole of $v$, not just the initial data.
\end{proof}

Finally, the nonradial case is not so different from the radial case.

\begin{proof}[Proof of Proposition \ref{prop_nonradial}] The proof is almost identical to that of Proposition \ref{prop_fritz_general}. We use the same substitution $v=F(u)$, with $F$ given by (\ref{F}). Then $v$ must be a solution of the free wave equation
$$
v_{tt}-\Delta v = 0,~v(0)=v_0=F(u_0),~v_t(0)=v_1=F'(u_0) u_1,
$$
which is guaranteed to exist.

This transformation is always invertible when $F(\pm \infty)=\pm\infty$, but in the other cases we need to check whether $v>F(-\infty)=a$ and/or $v<F(+\infty)=b$. This is done using the positivity criteria of Lemma \ref{positive_nonradial}.

If $F(-\infty)=a \in \R$, then our conditions will in fact imply that $v \geq \inf v_0$ or that $v \geq 0$. Stated in terms of $v$, the conditions are $\inf v_0 > -\infty$ and $v_1 \geq |\nabla v_0|$ or $(v_0, v_1)$ Schwartz functions and $-\Delta v_0 \geq |\nabla v_1|$. Note that
$$\begin{aligned}
\nabla v_0 &= F'(u_0) \nabla u_0,\\
\Delta v_0 &= F'(u_0) \Delta u_0 + F''(u_0) (\nabla u_0)^2,\\
\nabla v_1 &= F'(u_0) \nabla u_1 + F''(u_0) u_1 \nabla u_0,
\end{aligned}
$$
and that by definition $F''/F'=-f$ and $F'>0$. We obtain exactly condition (\ref{cond_nonradial1}).

If $D^2 u_0 \in L^{3/2, 1} \subset \mc K$, then $u_0 \in L^\infty$, so $F'(u_0)$ and $F''(u_0) \in L^\infty$. Assuming that $D^2 u_0,~\nabla u_1 \in L^{3/2, 1}$, it also follows that $\nabla u_0, u_1 \in L^{3, 1}$. Consequently $\Delta v_0,~\nabla v_1 \in L^{3/2, 1}$, so by Lemma \ref{positive_nonradial} $v$ is bounded. Under our previous conditions, this also implies that $u$ is bounded.
\end{proof}

\begin{proof}[Proof of Proposition \ref{prop_general}] Let $v=F(u)$, where $F$ is given by (\ref{F}). Note that by our definition $F(0)=0$. We obtain the free wave equation on $\R^{3+1}$ for $v$:
$$
v_{tt}-\Delta v=0,~v(0)=v_0=F(u_0),~v_t(0)=v_1=F'(u_0)u_1.
$$

It is easy to see that, since $u_0$ and $u_1$ are of Schwartz class, then so are $v_0-F(0)=v_0$ and $v_1$. Therefore the solution $v$ is globally defined on $\R^{3+1}$, $v(t)$ is of Schwartz class for each $t \in \R$, and $v$ disperses.

In particular, by the usual decay estimates, $\lim_{t \to \infty} \|v(x, t)\|_{L^\infty_x} = 0$. Since $v$ is continuous and for each fixed $t$ $\lim_{x \to \infty} v(x, t) = 0$, it follows that $\lim_{(x, t) \to \infty} v(x, t) = 0$.

Without loss of generality, assume $F(-\infty)=a \in \R$ and $F(+\infty)=b \in \R$; then $a<F(0)=0$ and $b>0$. From the above it follows that there exists some $R>0$ such that if $|(x, t)| > R$ then $v(x, t) \in (a/2, b/2)$. Since the set $\{(x, t): |(x, t)| \leq R\}$ is compact, $v$ reaches its maximum and minimum on this set, i.e.~$m=\min_{|(x, t)| \leq R} v \leq v \leq M = \max_{|(x, t)| \leq R} v$.

There are two cases. If $m \geq a$ or $M \leq b$, then the solution $u$ blows up in finite time; see the proof of Proposition \ref{blowup} for more details. Otherwise, one has $a<\inf v < \sup v<b$.

In the latter case, one can invert the transformation by taking $u=F^{-1}(v)$ and obtain a global smooth solution $u$ on $\R^{3+1}$ to equation (\ref{eq_fritz_general}). In addition, $u=F^{-1}(v)$ is bounded and more generally $(F^{-1})^{(n)}$ is bounded on the domain of $v$.

We obtain that $|u| \les |v|$ and $|D^n u| \les \sum_{k_1+\ldots+k_n=n} |D^{k_1} v| \ldots |D^{k_n} v|$ (with constants that may depend on the solution). Therefore $u(t)$ is a Schwartz function for each $t$, its Sobolev $H^n$ norms are uniformly bounded, and $u$ and all its derivatives disperse, because $v$ has these properties and dominates $u$.

Concerning scattering, note that $u=(F^{-1})'(0) v + O(v^2)$. The first term is a solution of the free wave equation, while the second term goes to zero in any $H^n$ Sobolev norm, since $v(t)$ is uniformly bounded in $H^n$ and $\lim_{t \to \infty} \|D^n v(t)\|_{L^\infty} = 0$ for any $n \geq 0$.
\end{proof}

\begin{proof}[Proof of Proposition \ref{prop_local_existence}] The proof uses the same ideas as that of Proposition \ref{prop_nonradial}. We make the transformation $v=F(u)$, where $F$ is given by (\ref{F}). Then $v$ is a solution of the free wave equation on $\R^{3+1}$
$$
v_{tt}-\Delta v = 0,~v(0)=v_0=F(u_0),~v_t(0)=v_1=F'(u_0)u_1.
$$
Since $u_0$ and $u_1$ are smooth functions, so are $v_0$ and $v_1$, leading to a smooth solution $v$ on $\R^{3+1}$.

In order to reverse the transformation, we need to ensure that $v > a$. Our hypotheses and Lemma \ref{local_bound} guarantee this on $\R^3 \times (-T, T)$, where $T$ is defined by (\ref{time}). Also, for any $t < T$, we see that $v \in L^\infty_{t, x}(\R^3 \times [-t, t])$ and $\inf v > a$ on $\R^3 \times [-t, t]$, which implies that $u \in L^\infty_{t, x}(\R^3 \times [-t, t])$ as well.
\end{proof}

\section{Large solutions to equation (\ref{eq_sup})}\lb{section_focusing}
%
%

\begin{proof}[Proof of Proposition \ref{maximum}] Consider the following sequence:
$$\begin{aligned}
&u^0(t)=\cos(t\sqrt{-\Delta})u_0+\frac{\sin(t\sqrt{-\Delta})}{\sqrt{-\Delta}}u_1, \\
&u^{n+1}(t)=\cos(t\sqrt{-\Delta})u_0 + \frac {\sin(t\sqrt{-\Delta})} {\sqrt{-\Delta}}u_1 + \int_0^t \frac{\sin((t-s)\sqrt{-\Delta})} {\sqrt{-\Delta}} |u^n(s)|^N u^n(s) \dd s.
\end{aligned}$$
The first term in the sequence is nonnegative due to our hypothesis, by (\ref{positive_outgoing}) --- possibly except for a measure zero set of forward light cones, in this case ---, Lemma \ref{positive}, or Lemma \ref{positive_nonradial}. Inductively we see that all $u^n$ are smooth and nonnegative, hence all the integrals are well-defined.

Furthermore, one proves by induction that the sequence $(u^n)_{n \geq 0}$ is monotonically increasing, due to the positivity of the kernel
$$
\frac {\sin(t\sqrt{-\Delta})}{\sqrt{-\Delta}}(x, y) = \frac 1 {4\pi t} \delta_{|x-y|=t}.
$$

Since the sequence $(u^n)_n$ is monotonically increasing, it must have a limit in $[0, +\infty]$ pointwise on $\R^{3+1}$. Let $u:=\lim_{n\to\infty}u^n$; clearly,
$$
\lim_{n \to \infty} |u^n|^N u^n = |u|^N u,
$$
with the usual convention that $(+\infty)^{N+1}=+\infty$.

By the monotone convergence theorem it follows that
$$
u(t)=\cos(t\sqrt{-\Delta})u_0 + \frac {\sin(t\sqrt{-\Delta})} {\sqrt{-\Delta}}u_1 + \int_0^t \frac{\sin((t-s)\sqrt{-\Delta})} {\sqrt{-\Delta}} |u(s)|^N u(s) \dd s,
$$
i.e. $u$ is a solution to (\ref{eq_sup}) with initial data $(u_0, u_1)$.

In order to compare two solutions, note that if $u_0 \geq v_0 \geq 0$ in case i, $(ru_0(r))_r \pm ru_1 \geq (rv_0(r))_r \pm rv_1 \geq 0$ in case ii, etc., then $u^0 \geq v^0 \geq 0$ and by induction $u^n \geq v^n \geq 0$ for every $n$, so $u \geq v \geq 0$.
\end{proof}

\begin{proof}[Proof of Proposition \ref{global_existence}] It is easy to prove by induction that $|v^n - v^{n-1}| \leq u^n - u^{n-1}$ and $|v^n| \leq u^n$ for each $n$. But the sequence $u^n(x, t)$ is increasing, so it converges whenever $u(x, t)$ is finite to the solution $\tilde u \leq u$ of (\ref{weak}) produced by Proposition \ref{maximum}.
	
Hence, whenever $u(x, t)$ is finite, $v^n(x, t)$ is a Cauchy sequence and converges to some limit $v(x, t)$. This is true regardless of whether the nonlinearity is focusing or defocusing. Passing to the limit on both sides, by dominated convergence it follows that $v$ is a solution of (\ref{weak}).

Assume that $(v_m)_{lin} \to v_{lin}$ pointwise a.e.~on each backward light cone. By induction and dominated convergence, then $v_n^m \to v^m$ for each $m$. On the other hand, $v_m^n(x, t) \to v_m(x, t)$ uniformly in $m$, due to their domination by $u$: $|v_m^n - v_m^{n-1}| \leq u^n - u^{n-1}$. Hence $v_m \to v$ pointwise a.e.~on each backward light cone.

Finally, suppose that the equation is defocusing, but the solution $v$ we constructed is not necessarily unique; consider some other solution $\tilde v$. Then both $u-v=V$ and $u-\tilde v = \tilde V$ will be nonnegative mild solutions of the equation
$$
V_{tt} - \Delta V = N(u) - N(u-V),~V(0)=V_0=u_0-v_0,~V_t(0)=V_1=u_1-v_1,
$$
with $N(u) = |u|^N u$, i.e.
$$
V(t) = \cos(t\sqrt{-\Delta}) V_0 + \frac {\sin(t\sqrt{-\Delta})}{\sqrt{-\Delta}} V_1 + \int_0^t \frac {\sin((t-s)\sqrt{-\Delta})}{\sqrt{-\Delta}} (N(u(s))-N(u(s)-V(s))) \dd s.
$$
This is a nonlinear equation to which we can apply the proof of Proposition \ref{maximum}, so it has a smallest nonnegative solution $V_{min}$ such that $V,\, \tilde V \geq V_{min}$.

However, both $v$ and $V_{min}$ are obtained by iteration, as the limits  of some sequences $v^n$ and $V^n$. It is easy to prove by induction that in fact $V^n = u - v^n$ for each $n$, so in the limit $V_{min} = u - v = V$. Therefore $\tilde V \geq V$, so $\tilde v \leq v$.
	
Likewise, $W=v+u$ and $\tilde W = \tilde v + u$ are nonnegative mild solutions of the equation 
$$
W_{tt} - \Delta W = N(W-u) + N(u),
$$
which has a smallest positive solution $W_{min}$, so $W,\, \tilde W \geq W_{min}$. Again, in fact $W_{min} = v+u$, so $\tilde v \geq v$.
	
Since $\tilde v \leq v$ and $\tilde v \geq v$, it follows that $\tilde v  = v$.
	
Obviously, this uniqueness argument is inspired by the usual proof of the dominated convergence theorem using Fatou's lemma.
\end{proof}

\begin{proof}[Proof of Corollary \ref{critic}] We use comparison with the solutions with initial data $((1\pm\epsilon)Q, 0)$.

A straightforward computation shows that
$$
\frac {d}{d\epsilon} E[(1\pm \epsilon) Q] \mid_{\epsilon=0} = 0,~\frac {d^2}{d\epsilon^2} E[(1\pm \epsilon) Q] \mid_{\epsilon=0} < 0,
$$
so $E[(1\pm \epsilon) Q] < E[Q]$ for small $\epsilon>0$. Using the criterion of \cite{kenig}, it follows that the solution with initial data $((1-\epsilon) Q, 0)$ disperses, while the solution with initial data $((1+\epsilon)Q, 0)$ blows up in finite time.

As an aside, note that our criteria guarantee that any solution with initial data $(CQ, 0)$, $C>0$, is positive.

It is well-known (see \cite{kenig}) that the finiteness of the $L^8_{t, x}$ norm implies that the solution must exist globally on $\R^{3+1}$. 
\end{proof}

\begin{proof}[Proof of Corollary \ref{supercritic}]
All conclusions save the last one follow directly from Proposition \ref{global_existence}.
	
Since the nonlinearity $|u|^N u$ is in $L^\infty_t L^{\frac{3N}{2(N+1)}, \infty}_x \subset L^\infty_t \dot B^{s_c-2}_{2, \infty\, x}$, for $N>2$ it also is in $L^1_{loc} \cap H^{-3/2}_{loc}$ and for $N>4$ it is in $L^{6/5}_{loc} \subset H^{-1}_{loc}$. Therefore its contribution is in $L^1_{loc} \cap H^{-1/2}_{loc}$ and for $N>4$ also in $L^2_{loc}$.
	
On the scale of Besov spaces, the nonlinearity's contribution is in $\dot B^{s_c-1}_{2, \infty\, x}$, but with a norm that can grow linearly with time:
$$
\|u(t)-u_{lin}(t)\|_{\dot B^{s_c-1}_{2, \infty\, x}} \les t.
$$
\end{proof}

\begin{proof}[Proof of Theorem \ref{thm_supercritic}] Condition (\ref{conditie}) implies that the solution $u$ is dominated, in the sense of Corollary \ref{global_existence}, by $Q_N(x-x_0)$ for any $x_0$ with $|x_0|=\alpha$. It immediately follows that $|u(x, t)| \leq C_N (\alpha+|x|)^{-2/N}$.
	
In the radial case, we compare $u$ directly with the supersolution $Q_{N, \alpha}$.

Next, we show that the solutions we constructed are well approximated by strong and even classical solutions. We use the fact that $\dot H^s \cap L^\infty$ is a Banach algebra for e.g.~$0 \leq s \leq 3/2$, hence so is $\dot B^s_{2, \infty}$ for $0<s<3/2$.
	
Thus, since $\|u_{lin}\|,\, \|u\|_{L^\infty_{t, x}} \les \alpha^{-2/N}$ and
$$
\|u(t)-u_{lin}(t)\|_{\dot B^{s_c-1}_{2, \infty}} \les t,~\|u(t)\|_{\dot B^{s_c}_{2, \infty} + \dot B^{s_c-1}_{2, \infty}} \les \langle t \rangle,
$$
it follows that, at least for even $N$,
$$
\||u(t)|^N u(t)\|_{\dot B^{s_c}_{2, \infty} + \dot B^{s_c-1}_{2, \infty}} \les \alpha^{-1} \langle t \rangle.
$$
In particular, the nonlinearity is in $L^2_{loc}$ when $N>4$. Plugging this bound back into the equation, we obtain that
$$
\|u-u_{lin}\|_{\dot B^{s_c+1}_{2, \infty} + \dot B^{s_c}_{2, \infty}} \les \alpha^{-1} t \langle t \rangle.
$$
Combining the two bounds, we get that
$$
\|u(t)-u_{lin}(t)\|_{\dot B^{s_c}_{2, \infty}} \les t (1+ \alpha^{-1} \langle t \rangle),~\|u(t)\|_{\dot B^{s_c}_{2, \infty}} \les 1 + t (1+ \alpha^{-1} \langle t \rangle),
$$
and moreover
$$
\|u_t(t)-u_{lin\, t}(t)\|_{\dot B^{s_c-1}_{2, \infty}} \les t (1+ \alpha^{-1} \langle t \rangle),~\|u_t(t)\|_{\dot B^{s_c-1}_{2, \infty}} \les t (1+ \alpha^{-1} \langle t \rangle).
$$
We can bootstrap this as many times as needed. In fact, since $u$ is uniformly bounded, the only limiting factor is the regularity of the initial data. If $(u_0, u_1) \in \dot H^s \times \dot H^{s-1}$, $s \geq 0$, it is easy to show along the same lines that e.g.~$\|(u(t), u_t(t))\|_{\dot H^s \times \dot H^{s-1}}$ is polynomially bounded, for each fixed $\alpha>0$. Thus, our solutions preserve regularity.
	
Consequently, when $\alpha>0$, smooth initial data give rise to classical solutions. In particular, for classical solutions local energy remains finite, if it is so to begin with, is monotone along light cones, and total energy is constant --- again, if it is finite to begin with.
	
In the defocusing case, energy conservation implies that both the free energy
$$
E_0[u](t) = \int_{\R^3} \frac {|u_t|^2 + |\nabla u|^2} 2 \dd x
$$
and the nonlinear energy $\|u\|_{L^{N+2}}^{N+2}$ are uniformly bounded. In the focusing case, we can only tell that they are growing at most polynomially, at a rate that depends on $\alpha>0$.
	
However, even in the focusing case, if $s_c<1$ then $L^{p_c, \infty} \cap L^\infty \subset L^{N+2}$, so the nonlinear term (hence the free energy also) is bounded with a bound that depends only on $\alpha$:
$$
\|u\|_{L^{N+2}}^{N+2} \les \alpha^{1-4/N},~\|(u, u_t)\|_{\dot H^1 \times L^2} \les \|(u, u_t)\|_{\dot H^1 \times L^2} + \alpha^{1-4/N}.
$$
	
When $s_c>1$, again taking into account that $u \in L^\infty$, it follows that $u$ always has locally finite energy:
$$
e(x, t) :=  \frac {|u_t|^2 + |\nabla u|^2} 2 \pm \frac {|u|^{N+2}}{N+2} \in L^1_{loc\, x}.
$$
This is not guaranteed when $s_c \leq 1$, unless we assume that $(u_0, u_1) \in H^1_{loc} \times L^2_{loc}$.
	
We now return to rough solutions and show that they are well approximated by classical solutions. Note that for each solution $\|u\|_{L^{3N}} \les \alpha^{-1/N}$. By Gronwall's inequality, it follows that for any two of our solutions $u$ and $\tilde u$ and for $s \geq 0$
$$
\|(u, u_t) - (\tilde u, \tilde u_t)\|_{\dot H^s \times \dot H^{s-1}} \les e^{|t|/\alpha} \|(u_0, u_1) - (\tilde u_0, \tilde u_1)\|_{\dot H^s \times \dot H^{s-1}}.
$$
	
If $N>4$ and $s_c>1$, then $L^{p_c, \infty} \subset L^{N+2}_{loc}$, so our rough solutions have locally finite energy. If $N \leq 4$ and $s_c \leq 1$, then $H^1_{loc} \subset L^{N+2}_{loc}$ and $\dot H^1 \cap \dot B^{s_c}_{2, \infty} \subset L^{N+2}$. Assuming that $(u_0, u_1) \in H^1_{loc} \times L^2_{loc}$, solutions $u$ again have locally finite energy and if $(u_0, u_1) \in H^1 \times L^2$ then all the energy terms grow at most polynomially.
	
In both cases, approximation by smooth solutions and energy monotonicity along light cones can be used to prove that energy is conserved and local energy does not increase.

In order to prove that the solution $u$ does not blow up, it is easiest to assume that the initial data have compact support. Due to the finite speed of propagation, $u$ will have compact support at any fixed time $t \in \R$. Since it is bounded, all its Lebesgue norms will be bounded, uniformly on compact time intervals. Therefore the Strichartz norms of $u$ will also be finite on compact time intervals.

In the defocusing case, if the initial data have finite energy, then the Morawetz inequality
$$
\int_\R u(x_0, t)^2 \dd t + \int_{\R^{3+1}} \frac {|\not \!\nabla u|^2} {|x-x_0|} + \int_{\R^{3+1}} \frac {u^{N+1}}{|x-x_0|} \les E[u]
$$
(where $\not \! \nabla = \nabla - \frac {x-x_0}{|x-x_0|}(\frac {x-x_0}{|x-x_0|} \cdot \nabla)$) guarantees that $\|u\|_{L^\infty_x L^2_t}^2 \les E[u] < \infty$. On the other hand, we already know that $\|u\|_{L^\infty_{t, x}} \les \alpha^{-2/N}$ and
$$
\|u\|_{L^{3N/2, \infty}_x L^\infty_t} \leq \|Q_N\|_{L^{3N/2, \infty}} \les 1.
$$
Interpolating between the three bounds, we first obtain that $\|u\|_{L^p_x L^\infty_t} < \infty$ for $3N/2<p \leq \infty$, then that $\|u\|_{L^{2N}_{t, x}}< \infty$ (assuming that $N>4$), so $u$ disperses and scatters.

A more general argument to prove that the solution does not blow up in finite time is the following: for $0 \leq s_c \leq 3/2$, $\dot H^{s_c} \cap L^\infty$ is a Banach algebra and
$$
\|f g\|_{\dot H^{s_c}} \les \|f\|_{\dot H^{s_c}} \|g\|_{L^\infty} + \|f\|_{L^\infty} \|g\|_{\dot H^{s_c}}.
$$
Then
$$
\||u(t)|^Nu(t)\|_{\dot H^{s_c}} \les \|u(t)\|_{\dot H^{s_c}} \|u\|_{L^\infty_{t, x}}^N.
$$
Furthermore,
$$
\||u(t)|^Nu(t)\|_{\dot H^{s_c-1}} \les \|u(t)\|_{\dot H^{s_c}} \|u\|_{L^\infty_{t, x}}^{N/2} \|u\|_{L^\infty_t L^{p_c, \infty}_x}^{N/2}.
$$
Since
$$
\bigg\|\frac {\sin(t\sqrt{-\Delta})}{\sqrt{-\Delta}} f\bigg\|_{\dot H^{s_c}} \leq |t| \|f\|_{\dot H^{s_c}},
$$
it follows that for $t \geq 0$
$$
\|(u(t), u_t(t)\|_{\dot H^{s_c} \times \dot H^{s_c-1}} \les \|(u_0, u_1)\|_{\dot H^{s_c} \times \dot H^{s_c-1}} + \int_0^t (t-s) \|(u(s), u_t(s)\|_{\dot H^{s_c} \times \dot H^{s_c-1}} \|u\|_{L^\infty_{t, x}}^N \dd s.
$$
By Gronwall's inequality $\|(u(t), u_t(t)\|_{\dot H^{s_c}} \les \|(u_0, u_1)\|_{\dot H^{s_c} \times \dot H^{s_c-1}} \exp(Ct^2\|u\|_{L^\infty_{t, x}}^N)$. Using this inequality for time $t \leq 1$ and iterating, we get that
$$
\|(u(t), u_t(t)\|_{\dot H^{s_c}} \les \|(u_0, u_1)\|_{\dot H^{s_c} \times \dot H^{s_c-1}} \exp(Ct\|u\|_{L^\infty_{t, x}}^N).
$$

Here $\|u\|_{L^\infty_{t, x}}^N$ can be replaced by $\|u\|_{L^\infty_{t, x}}^{N/2} \|u\|_{L^\infty_t L^{p_c, \infty}_x}^{N/2} \les \alpha$. In fact, our previous arguments show that Sobolev norms can grow at most polynomially.

Thus, if $\|(u_0, u_1)\|_{\dot H^{s_c} \times \dot H^{s_c-1}} < \infty$, then $\|u(t)\|_{\dot H^{s_c}}$ is bounded on compact time intervals. This immediately implies that the nonlinearity $\||u(t)|^N u(t)\|_{\dot H^{s_c}}$ is also bounded on compact time intervals and the same for Strichartz norms.

Following the result of \cite{duro}, for radial solutions in either the focusing or the defocusing case, the boundedness of the critical Sobolev norm on compact time intervals implies that the solution disperses and scatters.
\end{proof}

\section*{Acknowledgments}
We would like to thank Wilhelm Schlag for the useful discussions and the anonymous referee for some suggestions.

This work was partially supported by a grant from the Simons Foundation (\#429698, Marius Beceanu) and by startup funds provided by the University at Albany (SUNY). Part of this work was done while M.B.\;was visiting the University of Chicago.

This work was partially supported by a grant from the
Simons Foundation (\#395767 to Avraham Soffer). A.S.\;is partially supported by NSF grant DMS 1201394. Part of this work was done while A.S.\;was visiting at CCNU, Wuhan, China.

\end{document}